\setlist[enumerate]{label={(\arabic*)}}
\crefname{equation}{}{}
\lstdefinestyle{mystyle}{
    backgroundcolor=\color{white},
    commentstyle=\color{gray},
    keywordstyle=\color{blue},
    numberstyle=\tiny\color{gray},
    stringstyle=\color{orange},
    basicstyle=\ttfamily\footnotesize,
    breaklines=true,
    captionpos=b,
    numbers=left,
    numbersep=5pt,
    showstringspaces=false,
    tabsize=4
}
\numberwithin{equation}{section}
\newtheorem{theorem}{Theorem}[section]
\newtheorem{lemma}[theorem]{Lemma}
\newtheorem{proposition}[theorem]{Proposition}
\newtheorem*{theorem*}{Theorem}
\newtheorem*{question}{Question}
\theoremstyle{definition}
\newtheorem{definition}[theorem]{Definition}
\theoremstyle{remark}
\newtheorem{remark}[theorem]{Remark}
\newcommand{\Alt}{\mathop{\mathrm{Alt}}}
\newcommand{\F}{\mathbb{F}}
\newcommand{\E}{\mathbb{E}}
\newcommand\br[1]{{\left(#1\right)}}
\newcommand\sqbr[1]{{\left[#1\right]}}
\newcommand\gbinom[2]{{#1 \brack #2}}
\newcommand\gen[1]{\left\langle#1\right\rangle}
\def\cH{\mathcal{H}} 
 \definecolor{mycolor}{rgb}{0.55,0.0,0.16}
  \definecolor{myred}{rgb}{0.75,0.0,0.16}
  \definecolor{mygreen}{rgb}{0.0,0.4,0.16}
  \definecolor{myviolet}{rgb}{1,0,1}
   \definecolor{mypink}{rgb}{0.67,0,0.47}
\numberwithin{equation}{section}
\subjclass[2020]{Primary: 20D15, 15A63}
\keywords{alternating maps, $p$-groups, $d$-maximal groups}
\author{Sean Eberhard}
\address{\parbox{\linewidth}{Sean Eberhard, Mathematics Institute, Zeeman Building, University of Warwick,
Coventry CV4~7AL, United Kingdom \vspace{0.1cm}}}
\email{sean.eberhard@warwick.ac.uk}
\author{Luca Sabatini}
\address{\parbox{\linewidth}{Luca Sabatini, Mathematics Institute, Zeeman Building, University of Warwick,
Coventry CV4~7AL, United Kingdom \vspace{0.1cm}}}
\email{luca.sabatini@warwick.ac.uk}
\thanks{The authors are supported by the Royal Society.}
\begin{document}
\title[Probabilistic construction of some extremal $p$-groups]
{Probabilistic construction\\of some extremal $p$-groups}

\begin{abstract}
    A $p$-group $G$ is called \emph{ab-maximal} if $|H : H'| < |G:G'|$ for every proper subgroup $H$ of $G$.
    Similarly, $G$ is called \emph{$d$-maximal} if $d(H) < d(G)$ for every proper subgroup $H$ of $G$, where $d(H)$ is the minimal number of generators of $H$.
    If $G$ is ab-maximal and nonabelian then $|G:G'| \ge p^3 |G'|$, while if $G$ is $d$-maximal and $p \ne 2$ then $|G:G'| \ge p^2 |G'|$.
    Answering questions of Gonz\'alez-S\'anchez--Klopsch and Lisi--Sabatini, for all $p$ we construct infinitely many ab-maximal $p$-groups of class $2$ with $|G:G'| = p^3 |G'|$,
    and infinitely many $d$-maximal $p$-groups of class $2$ with $|G:G'| = p^2 |G'|$.
    The construction is probabilistic and based on the degeneracy of random alternating bilinear maps on subspaces.
    It is notable however that in the ab-maximal case we do not have a high-probability result but rather in a suitable sense the proportion of class-$2$ groups with $|G:G'| = p^n$ and $|G'| = p^{n-3}$ that are ab-maximal is close to $1/e$ (where $e$ is the base of the natural logarithm).
\end{abstract}

\maketitle

\section{Introduction}

If $G$ is a finite group, we write $d(G)$ for the minimal size of a generating set.
We call $G$ \emph{$d$-maximal} if $d(H) < d(G)$ for every proper subgroup $H$ of $G$.
This terminology was introduced by Kahn in \cite{Kah91}.
Although $d$-maximal groups are not nilpotent in general \cite{LSS24}, in this paper we only consider $d$-maximal $p$-groups.
In this case $d(G) = \log_p |G : G' G^p|$, so the condition above can equivalently be written as $|H : H' H^p| < |G : G' G^p|$ for all $H < G$.
Laffey~\cite{Laf73} proved that if $G$ is a $d$-maximal $p$-group with $p$ odd then $G$ has class at most two, and $G/G'$ and $G'$ are both elementary abelian.

A closely related concept is ab-maximality.
We say that a finite group $G$ is \emph{ab-maximal} if $|H:H'|<|G:G'|$ for every proper subgroup $H$ of $G$.
It was observed in \cite{Sab22} that ab-maximal groups are nilpotent of class at most two,
and that every group of order $n$ contains an ab-maximal subgroup of order at least $n^{c/\log\log n}$ for some constant $c > 0$.
This result is sharp even if we replace ``ab-maximal'' with ``solvable'',
and motivates the study of ab-maximal groups in greater detail.

For a group of prime exponent, $d$-maximality and ab-maximality are equivalent.
Trivial examples include elementary abelian groups and extraspecial groups,
and it is natural to ask how large the derived subgroup can be in these groups in general.
In the literature, this has been asked a couple of times \cite{GK11,LS24}.
For example, Question C in \cite{LS24} asks whether
every ab-maximal group $G$ satisfies $|G:G'| \geq |G|^\delta$ for some fixed $\delta>0.5$.
We answer this question in the negative.

\begin{theorem} \label{thExTop}
    For each prime $p$ and $n \geq 160$,
    there exists an ab-maximal $p$-group $G$ (of exponent $p$ if $p \ne 2$)
    such that $|G : G'|=p^n$ and $|G'|=p^{n-3}$.
\end{theorem}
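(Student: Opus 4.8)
The plan is to translate ab-maximality into a property of a single alternating bilinear map and then exhibit such a map probabilistically. First I would reduce to linear algebra. For $p$ odd, the class-$2$ groups of exponent $p$ with $G/G'\cong\F_p^n$ and $G'\cong\F_p^{n-3}$ correspond, via the Baer/Lazard correspondence (concretely, the commutator map), to surjective alternating bilinear maps $\beta\colon V\times V\to W$ with $\dim V=n$ and $\dim W=n-3$; for $p=2$ the same data together with any quadratic refinement $q\colon V\to W$ of $\beta$ (one always exists over $\F_2$) determines such a group $G$. Given $H\le G$, write $U=HG'/G'\le V$; then $H'$ is exactly the subspace $\beta(U,U):=\langle\beta(u,u'):u,u'\in U\rangle$ of $W$, so $|H:H'|\le p^{\dim U+(n-3)-\dim\beta(U,U)}$, with equality when $H$ is the preimage of $U$ — a proper subgroup precisely when $U\ne V$ (and $U=V$ forces $H=G$, as $\beta$ is onto). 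Hence $G$ is ab-maximal if and only if
\[
\dim\beta(U,U)\ \ge\ \dim U-2\qquad\text{for every proper subspace }U<V .
\]
So it suffices to produce, for each $p$ and each $n\ge160$, one surjective alternating $\beta$ with this property; the group it defines then has $|G:G'|=p^n$ and $|G'|=p^{n-3}$.

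I would then take $\beta$ uniformly at random among all alternating maps $V\times V\to W$ (non-surjective ones occur with probability $\le p^{(n-3)-\binom n2}$, negligibly). For $U$ of dimension $k$, failure of the displayed inequality is equivalent to $\dim\ker(W^*\to\wedge^2 U^*)\ge n-k$, where $w^*\mapsto w^*\circ\beta|_{U\times U}$ is a uniformly random linear map from an $(n-3)$-dimensional to a $\binom k2$-dimensional space; a standard count of low-rank matrices gives $\Pr[U\text{ bad}]\ll p^{-(n-k)(k^2-3k+6)/2}$. Multiplying by the number $\asymp p^{k(n-k)}$ of $k$-dimensional subspaces, the expected number of bad $U$ of dimension $k$ is $\ll p^{-(n-k)(k-2)(k-3)/2}$, with absolute implied constants. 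This is identically $0$ for $k\le 2$ (because $\beta$ is alternating), and its sum over $4\le k\le n-1$ is $o(1)$ as $n\to\infty$, uniformly in $p$. So the whole difficulty is concentrated at $k=3$, where ``$U$ bad'' means exactly that $\beta$ vanishes identically on the $3$-dimensional space $U$.

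Let $A_U$ be this event, so $\Pr[A_U]=p^{-3(n-3)}$ and the expected number of bad $3$-spaces is $\lambda_{n,p}=\gbinom{n}{3}\,p^{-3(n-3)}$ (Gaussian binomial, base $p$), which increases in $n$ with limit $\tfrac{p^6}{(p^3-1)(p^2-1)(p-1)}$ and hence stays below $64/21$ for all $p$ and $n$. Distinct $3$-spaces $U\ne U'$ give independent events unless $\dim(U\cap U')=2$, and a second-moment computation shows the overlap contribution is $o(1)$ (such a pair imposes $5(n-3)$ rather than $6(n-3)$ linear conditions on $\beta$, while such pairs are comparatively rare). So by the Chen--Stein Poisson approximation — or, for bare positivity, the Lov\'asz Local Lemma, since the $A_U$ have dependency degree $\ll p^n$, far below $\Pr[A_U]^{-1}=p^{3(n-3)}$ — one obtains $\Pr[\text{no bad }3\text{-space}]\ge e^{-\lambda_{n,p}}-o(1)\ge e^{-64/21}-o(1)>0$ for $n$ large. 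Combined with the $k\ge4$ bound, a random $\beta$ meets the ab-maximality criterion with probability at least $e^{-64/21}-o(1)>0$; the threshold $n\ge160$ is what makes the various $o(1)$'s (and the minor extra book-keeping for $p=2$) quantitatively harmless, and any $\beta$ in this event — with an arbitrary quadratic refinement chosen if $p=2$ — yields the desired group. Since $\lambda_{n,p}\to1$ as $p\to\infty$, the same computation is the source of the ``$1/e$'' in the abstract.

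The main obstacle is the lower bound for $\Pr[\text{no bad }3\text{-space}]$: one must rule out that the $\Theta(1)$ expected totally isotropic $3$-spaces always clump together, that is, control the dependence among the events $A_U$, and then push all the error terms down far enough that the single threshold $n\ge160$ works for every prime $p$. The reduction and the first-moment bounds for $k\ge4$ are by comparison routine.
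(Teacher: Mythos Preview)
Your reduction to linear algebra and your first-moment analysis for $k\ge 4$ are both correct and match the paper exactly. The genuine gap is in the $k=3$ step, where you appeal either to the Lov\'asz Local Lemma or to Chen--Stein with only a second-moment input. Both of these rely on a joint independence property that \emph{fails} here, and the paper explicitly warns about exactly this pitfall.

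Concretely: the event $A_U=\{\beta|_{U\times U}=0\}$ depends only on the restriction of the associated linear map $\tilde\beta\colon V\wedge V\to W$ to the subspace $U\wedge U$. Pairwise independence of $A_U$ and $A_{U'}$ holds whenever $\dim(U\cap U')\le 1$, as you say, but the Local Lemma (and the vanishing of the $b_3$ term in the Arratia--Goldstein--Gordon form of Chen--Stein) needs $A_U$ to be independent of the \emph{entire collection} $\{A_{U'}:\dim(U\cap U')\le 1\}$. That is false: for instance with $U=\langle e_1,e_2,e_3\rangle$ and $U'_1=\langle e_4,e_5,e_6\rangle$, $U'_2=\langle e_4{+}e_1,e_5{+}e_2,e_6\rangle$, $U'_3=\langle e_4{+}e_1,e_5,e_6\rangle$, $U'_4=\langle e_4,e_5{+}e_2,e_6\rangle$, each $U'_i$ meets $U$ trivially, yet $e_1\wedge e_2\in\sum_i U'_i\wedge U'_i$, so $\tilde\beta(e_1\wedge e_2)$ is determined by the events $A_{U'_i}$. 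Thus your dependency graph does not satisfy the LLL hypothesis, and a second-moment bound alone cannot control the $b_3$ term. (A second moment also cannot by itself give $\Pr[N_3=0]>0$ when the mean exceeds $1$, as it does here for small $p$.)

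What the paper does instead is prove a combinatorial lemma: for any distinct $3$-spaces $H_1,\dots,H_k$,
\[
\dim\sum_{i=1}^k H_i\wedge H_i\ \ge\ \dim\sum_{i=1}^k H_i,
\]
with equality only when the $H_i$ are linearly independent. This controls \emph{all} factorial moments $\E[(N_3)_k]$ simultaneously, giving the Poisson limit by the method of moments and, for the explicit threshold $n\ge 160$, a Bonferroni (truncated inclusion--exclusion) bound on $\Pr[N_3>0]$ using moments up to $k=9$. So the missing ingredient in your sketch is precisely this higher-order control of $\dim\sum_i H_i\wedge H_i$; once you have it, your outline becomes the paper's proof.
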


In general if $G$ is a nonabelian ab-maximal $p$-group then $|G : G'| \ge p^3 |G'|$, so the multiplicative gap $p^3$ is the smallest possible.
The inequality $n \ge 160$ is just a convenience, and needed mainly if $p = 2$.
If $p \ge 13$ it suffices to take $n \ge 12$.
In truth we expect $n \ge 6$ is sufficient for all $p$.

To prove the above theorem we use standard ideas to reduce to a question in linear algebra, which we solve using probabilistic methods.
This is not the first time that wild $p$-groups of class two have been constructed using a random alternating map.
For example, using essentially this idea Ol'shanskii~\cite{Ols78} showed the existence of groups of order $p^n$ where all abelian subgroups have order at most $p^m$ where $m \sim \sqrt{8n}$.
See also \cite{BGH87} or \cite[Theorem 1.9]{HPPS24} for applications of the same method.
Notably, however, \Cref{thExTop} is \emph{not} obtained from a high-probability result, but rather we show roughly that a random candidate $p$-group (namely, a $p$-group $G$ of Frattini class two and $|G : G'| = p^3 |G'|$) is ab-maximal with probability close to $1 / e$.
In linear algebraic terms, the basic reason for this curious result is that a random alternating bilinear map $B \colon \F_p^n \times \F_p^n \to \F_p^{n-3}$ has a number of totally isotropic $3$-dimensional subspaces that is asymptotically distributed as a Poisson distribution.

Using the same method we construct $d$-maximal groups with large derived subgroup.
This case is a bit easier and in fact we do obtain a high-probability result.
In particular we give a complete positive answer to Question~4.5 in \cite{GK11}.

\begin{theorem} \label{thExDMax}
    For each prime $p$ and $n \geq 2$,
    there exists a $d$-maximal $p$-group $G$
    such that $|G : G'|=p^n$ and $|G'| = p^{n-2}$.
\end{theorem}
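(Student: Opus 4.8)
The plan is to reduce, by standard means, to a statement about random central extensions and then to settle it with a first-moment computation. By Laffey's theorem a $d$-maximal $p$-group with $p$ odd has class $\le 2$ with $V := G/G' \cong \F_p^n$ and $W := G' \cong \F_p^{n-2}$ both elementary abelian; for $p = 2$ we simply look for a $G$ of this shape. Treat $p$ odd first. Via the usual dictionary, such a $G$ is a central extension of $V$ by $W$ determined by its commutator form $\beta \colon V \times V \to W$ (alternating) together with the $p$-power map $q \colon V \to W$ (which is linear, since $(xy)^p = x^p y^p$ in a class-$2$ group with $p$ odd), and it has the stated invariants exactly when $\langle \operatorname{im}\beta\rangle = W$. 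For a subspace $U \le V$ of dimension $k$, every subgroup $H \le G$ whose image in $V = G/G'$ is $U$ has $H' = [U,U]$ and $H^p = q(U)$, as subspaces of $W$ depending only on $U$, hence $\Phi(H) = [U,U] + q(U) =: \Phi(U)$ and so $d(H) = k + \dim(H \cap W) - \dim\Phi(U) \le k + (n-2) - \dim\Phi(U)$, with equality when $H \cap W = W$; and $H$ is proper exactly when $U \subsetneq V$. For such a $G$ we have $\Phi(G) = W$, so $d(G) = n$, and we reach the key equivalence: \emph{$G$ is $d$-maximal if and only if $\dim\Phi(U) \ge k-1$ for every subspace $U \subsetneq V$ of dimension $k$.}

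Now fix $q$ to be any surjective linear map $V \to W$, so $\dim\ker q = 2$, and choose $\beta$ uniformly at random among alternating maps $V \times V \to W$; I claim that with probability $1 - o(1)$ the right-hand condition holds and $\langle\operatorname{im}\beta\rangle = W$. Fix $U$ of dimension $k$ with $2 \le k \le n-1$. If $\dim\Phi(U) \le k-2$ then $\Phi(U) \subseteq T$ for some $(k-2)$-dimensional $T \le W$; since $q(U) \subseteq \Phi(U)$ and $\dim q(U) \ge k-2$, this forces $\ker q \subseteq U$ with $T = q(U)$ (so $T$ is unique), while also $[U,U] \subseteq T$, an event of probability $p^{-\binom{k}{2}(n-k)}$ (projecting to $W/T \cong \F_p^{n-k}$ turns $\beta|_{U \times U}$ into a uniformly random alternating map to $\F_p^{n-k}$). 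The number of $k$-dimensional $U$ containing $\ker q$ is at most $p^{(k-2)(n-k)}$, so the expected number of ``bad'' $U$ of dimension $k$ is at most $p^{(n-k)(k-2-\binom{k}{2})}$; since $\binom{k}{2} \ge k-1$ for all $k \ge 2$, this exponent is $\le -(n-k) \le -1$, and summing over $k = 2, \dots, n-1$ gives an expected number of bad subspaces that is less than $1$ and, being dominated by the $k=2$ term $p^{-(n-2)}$, tends to $0$ as $n \to \infty$. Lastly, if $\operatorname{im}\beta$ lay in a proper $W_1 \subsetneq W$, then (as $\dim W_1 \le n-3$) one could take $U \supseteq \ker q$ of dimension $\dim W_1 + 2 \le n-1$ with $q(U) = W_1$, whence $\Phi(U) \subseteq W_1$ has dimension $\le \dim U - 2$, contradicting $d$-maximality; so $\langle\operatorname{im}\beta\rangle = W$ is automatic. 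Thus for every $n \ge 2$ a suitable $G$ exists --- the case $n = 2$ being $\F_p^2$ --- and for large $n$ a random $\beta$ works with probability $1 - o(1)$, the promised high-probability statement.

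For $p = 2$ the map $x \mapsto x^2$ is not a homomorphism, so one replaces the pair $(\beta, q)$ by a single quadratic map $\sigma \colon V \to W$ (each coordinate a quadratic form) whose polarization is the commutator form; then $\Phi(H) = \langle \sigma(u) : u \in U\rangle$ for every $H$ over $U$, and the dictionary reads: $G$ is $d$-maximal iff $\dim\langle\sigma(U)\rangle \ge k-1$ for every proper $U$ of dimension $k$. Choosing $\sigma$ uniformly at random, the same first-moment argument --- now a union bound over pairs $(U,T)$ with $\dim U = k$, $\dim T = k-2$, using that the restriction of a degree-$\le 2$ map to a subspace is again a uniformly random such map --- bounds the expected number of bad $U$ by a geometric series that is less than $1$ and tends to $0$; to this one adds the small probability that the polarization of $\sigma$ fails to span $W$.

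I expect the one genuinely non-obvious point to be that the $p$-power data cannot be trivial. The naive candidates --- groups of exponent $p$ with $G/G'$ and $G'$ elementary abelian --- never work for $n \ge 3$: a random alternating map $\F_p^n \times \F_p^n \to \F_p^{n-2}$ has about $p^{n-2}$ totally isotropic $2$-dimensional subspaces $U$, each violating $\dim\Phi(U) \ge 1$ when $q = 0$. The construction succeeds because a surjective $q$ has $\ker q$ of dimension exactly $2$, so that the $d$-maximality condition is automatically satisfied unless $\ker q \subseteq U$; the first moment is then a sum over this much sparser family and converges. Arranging that it converges for every $n \ge 2$, and not just for $n$ large, is the main point requiring care; the passage from group theory to linear algebra is entirely routine.
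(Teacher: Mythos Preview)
Your approach is essentially the same as the paper's: reduce via the commutator/power dictionary to the linear-algebraic criterion $\dim(\beta(U,U)+q(U))\ge \dim U-1$ for all proper $U$, fix a surjective $q$ (for $p$ odd) and randomize $\beta$, observe that badness forces $\ker q\subseteq U$ and $\beta(U,U)\subseteq q(U)$, and bound the expected number of bad $U$ by a union bound. The odd-$p$ argument is correct in substance; the only slip is that ``at most $p^{(k-2)(n-k)}$'' for the number of $k$-spaces containing $\ker q$ is off by the usual Gaussian-binomial constant (cf.\ the paper's bound $\gbinom{n-2}{k-2}\le 4\,p^{(k-2)(n-k)}$), which is harmless once inserted.

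The $p=2$ case has a genuine gap. Your claim that the first-moment bound ``is less than $1$'' for all $n$ is false: for $n=4$ the exact expected number of bad subspaces is
\[
\gbinom{4}{2}_2\, 2^{-6}+\gbinom{4}{3}_2\gbinom{2}{1}_2\, 2^{-6}=\tfrac{35}{64}+\tfrac{45}{64}=\tfrac{80}{64}>1,
\]
so Markov gives nothing. The paper meets exactly the difficulty you flag (``arranging that it converges for every $n\ge2$ \dots is the main point requiring care'') by proving the first-moment bound only for $n\ge 7$ and then verifying $n\in\{3,4,5,6\}$ by computer search for a single good quadratic map. You need either such a finite check or explicit examples in those small cases; without it, the $p=2$ part of the theorem is not established. (Your extra term ``the small probability that the polarization fails to span $W$'' is also unnecessary: as in the odd-$p$ case, the $d$-maximality inequality for a hyperplane already forces $B(V,V)=W$.)
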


\section{Extremal ab-maximal and $d$-maximal groups}

\subsection{ab-maximal groups}

Recall that a finite group $G$ is called ab-maximal if $|H:H'|<|G:G'|$ for all proper subgroups $H$ of $G$.
(Here $G'=[G,G]$ denotes the derived subgroup of $G$.)
These groups are called $\gamma_2$-maximal in \cite{LS24}, with reference to the more general notion of $w$-maximality introduced in \cite{GK11}.
In this subsection $p$ is a prime and $G$ is a nonabelian ab-maximal $p$-group.
We do not require $p$ to be odd.

The following result of Thompson \cite{Tho69} is crucial.
See \cite{LS24}*{Lemma~2.9} for a wide generalization.

\begin{theorem}[Thompson] \label{thTho}
    Let $G$ be an ab-maximal $p$-group.
    Then
    $G' \leqslant Z(G)$.
\end{theorem}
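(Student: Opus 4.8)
The plan is to induct on $|G|$, reducing first to the case where $G$ has nilpotency class exactly $3$ and then ruling that case out by applying ab-maximality to the subgroup $C_G(G')$. The induction rests on a preliminary fact: \emph{ab-maximality passes to $G/N$ whenever $N\trianglelefteq G$ with $N\le G'$}. This is a short computation — for $H\le G$ one has
\[
\bigl|(HN/N):(HN/N)'\bigr|\;=\;|H:H'|\cdot\frac{|H'\cap N|}{|H\cap N|}\;\le\;|H:H'|,
\]
whereas $|(G/N):(G/N)'|=|G:G'|$ because $N\le G'$, so ab-maximality of $G$ descends. (This genuinely uses $N\le G'$: ab-maximality does not survive arbitrary quotients.)

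Now suppose the statement fails and let $G$ be a counterexample of least order. Then $\gamma_3(G)=[G',G]\ne 1$, so we may pick a central subgroup $W\le\gamma_3(G)\cap Z(G)$ of order $p$, and note $W\le G'$. By the preliminary fact $G/W$ is ab-maximal, so by minimality $(G/W)'\le Z(G/W)$, i.e.\ $\gamma_3(G)\le W$. Hence $\gamma_3(G)=W$ has order $p$ and is central, $G$ has class exactly $3$, $G'$ is abelian (as $G''\le\gamma_4(G)=1$), and $[G',G]=W$.

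It remains to derive a contradiction in this situation, and this is the heart of the matter. Put $C=C_G(G')$; then $C<G$ because $G'\not\le Z(G)$, and $C'\le G'$. The commutator induces a biadditive pairing $G/C\times G'\to W\cong\F_p$, $(gC,x)\mapsto[x,g]$, with trivial left radical and right radical $Z(G)\cap G'$; since $g\mapsto[-,g]$ embeds $G/C$ into $\operatorname{Hom}(G',W)$, the abelian group $G/C$ has exponent $p$, and non-degeneracy of the pairing then forces $|G:C|=|G':(Z(G)\cap G')|$. On the other hand — and this is the key point — the Hall--Witt identity in a class-$3$ group reduces to $[[x,y],z]\,[[y,z],x]\,[[z,x],y]=1$, and applying it with $x,y\in C$ and $z\in G$ makes the last two factors vanish, since $[y,z]$ and $[z,x]$ lie in $G'$ and are therefore centralized by $C$; this leaves $[[x,y],z]=1$, so $C'=[C,C]$ is centralized by $G$, i.e.\ $C'\le Z(G)\cap G'$. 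Combining, $|G':C'|\ge|G':(Z(G)\cap G')|=|G:C|$, which rearranges to $|C:C'|\ge|G:G'|$, contradicting ab-maximality of $G$ at the proper subgroup $C$.

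I expect the \emph{main obstacle} to be conceptual rather than computational: recognizing that $C_G(G')$ is the right subgroup to test, and that the Hall--Witt identity exactly annihilates the commutators coming from $[C,C]$. Once that is in hand, the remaining ingredients — the quotient lemma of the first paragraph, and checking that the commutator pairing above is non-degenerate with $G/C$ elementary abelian — are routine.
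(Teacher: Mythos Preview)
The paper does not supply its own proof of this theorem: it is stated with attribution to Thompson~\cite{Tho69}, together with a pointer to \cite{LS24}*{Lemma~2.9} for a generalization, and no argument is given. So there is nothing in the paper to compare your argument against directly.

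That said, your proof is correct and is essentially Thompson's original argument. The quotient lemma (ab-maximality descends to $G/N$ when $N\le G'$) and the minimal-counterexample reduction to class exactly~$3$ are standard and sound. The key step --- testing ab-maximality at $C=C_G(G')$, using the Jacobi-type identity valid in class $\le 3$ to force $C'\le Z(G)\cap G'$, and combining this with $|G:C|=|G':G'\cap Z(G)|$ from the nondegenerate pairing into $W$ --- goes through as you describe. One small point you leave implicit: deducing equality of orders from nondegeneracy requires both $G/C$ and $G'/(G'\cap Z(G))$ to be $\F_p$-vector spaces, and you only verify this for $G/C$. The same bilinearity gives $[x^p,g]=[x,g]^p=1$ for $x\in G'$, so $G'/(G'\cap Z(G))$ is elementary abelian too; alternatively, you only need the inequality $|G:C|\le|G':G'\cap Z(G)|$, which already follows from the embedding $G/C\hookrightarrow\operatorname{Hom}(G'/(G'\cap Z(G)),W)$ together with the general bound $|\operatorname{Hom}(A,\F_p)|\le|A|$ for finite abelian $p$-groups $A$.
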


It is trivial that $|G : G'| > |Z(G)|$, and a slightly more involved argument provides the following sharp inequality.

\begin{lemma} \label{lemExtTop}
    Let $G$ be a nonabelian ab-maximal $p$-group.
    Then
    $|G : G'| \geq |Z(G)| p^3$.
\end{lemma}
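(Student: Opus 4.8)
The plan is to build the bound by choosing a suitable subgroup $H < G$ and comparing $|H:H'|$ with $|G:G'|$ via ab-maximality. By Thompson's theorem (\Cref{thTho}) we know $G' \le Z(G)$, so $G$ has class exactly $2$ (it is nonabelian), and $G/Z(G)$ is a nontrivial elementary-abelian-like quotient in the sense that the commutator map descends to a well-defined alternating biadditive map $B \colon G/Z(G) \times G/Z(G) \to G'$. Writing $q = |G/Z(G)|$, the non-degeneracy of $B$ (its radical is trivial by definition of $Z(G)$) forces $q \ge p^2$: a single nonzero vector cannot span a non-degenerate alternating form, so $\dim(G/Z(G)) \ge 2$ over any prime field, and more carefully one gets $q \ge p^2$. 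This already gives $|G:G'| = |G:Z(G)|\,|Z(G):G'| \ge q\,|Z(G):G'|$, which is not yet enough; we need to gain an extra factor of $p$ and, crucially, to feed ab-maximality in.

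The key step is to exhibit a proper subgroup $H$ whose abelianization is large. The natural candidate is $H = \langle x, Z(G)\rangle$ for a well-chosen $x \in G \setminus Z(G)$: then $H' = [H,H] = [x, Z(G)] = 1$ since $Z(G)$ is central, so $H$ is abelian and $|H:H'| = |H| = p\,|Z(G)|$. Ab-maximality gives $p\,|Z(G)| = |H:H'| < |G:G'|$, hence $|G:G'| \ge p\,|Z(G)|\cdot p = p^2|Z(G)|$ — still one factor short. To get the third factor of $p$, I would instead take $H$ to be a maximal subgroup of $G$ containing a preimage of a codimension-$1$ subspace $W$ of $G/Z(G)$ that is totally isotropic for $B$; more precisely, pick $x_1, x_2 \in G$ whose images in $G/Z(G)$ are linearly independent and commute (i.e. $[x_1,x_2]=1$), set $H = \langle x_1, x_2, Z(G)\rangle$, so that $H$ is abelian of order $p^2|Z(G)|$, giving $p^2 |Z(G)| < |G:G'|$ and thus $|G:G'| \ge p^3|Z(G)|$. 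The existence of such a commuting pair $x_1, x_2$ with independent images is the crux: one needs a $2$-dimensional totally isotropic subspace of the nondegenerate alternating form $B$ on $G/Z(G)$. But here one must be careful — $B$ takes values in $G'$, not a $1$-dimensional space, so ``totally isotropic'' must be interpreted componentwise after composing with functionals on $G'$, and it is not automatic that a common isotropic plane exists when $\dim G' > 1$.

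The main obstacle is therefore precisely this: when $G'$ has large rank, there may be no $2$-dimensional subspace of $V := G/Z(G)$ on which \emph{all} the scalar forms $\phi \circ B$ ($\phi \in (G')^\ast$) vanish simultaneously, so the ``obvious'' abelian subgroup of order $p^2|Z(G)|$ need not exist. The way around it, which I expect the authors' proof to take, is to choose $H$ not abelian but with small derived subgroup: take any $2$-dimensional $W \le V$ and let $H$ be the preimage of $W$ in $G$; then $H' = [H,H] = B(W,W) \le G'$, and one estimates $|H:H'| = p^2 |Z(G)| / |H'| \ge p^2|Z(G)|/p^{\binom{2}{2}} = p^2|Z(G)|/p = p\,|Z(G)|$ — worse. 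Instead the right move is a dimension count: choosing $W$ of dimension $k$ so that $B(W,W)$ is as small as possible, one balances $p^k / |B(W,W)|$; since a generic $k$-subspace has $\dim B(W,W) \le \binom{k}{2}$ but one can often do much better, and one wants $k - \dim B(W,W) \ge 3$. The cleanest instance: since $\dim V \ge \dim G' + $ (something), pick $W$ of codimension $1$ in $V$, so $|H:G'| = p^{\dim V - 1}$; if $\dim V - 1 - \dim G' \ge 2$ we might still be short by one, and the final factor of $p$ comes from the inequality $\dim V \ge \dim G' + 2$ being \emph{strict} in the extremal configuration, or from a parity/nondegeneracy refinement. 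I would structure the writeup as: (1) invoke \Cref{thTho} to get class $2$ and set up $B$; (2) prove $\dim V \ge \dim(G') + 2$ by a rank argument on alternating forms; (3) locate a codimension-$1$ subspace $W$ with $B(W,W) = B(V,V) = G'$, so $H = $ preimage of $W$ satisfies $H' = G'$ and $|H:H'| = p^{-1}|G:G'|$, whence ab-maximality gives nothing — so instead (3$'$) locate $W$ of codimension $1$ with $\dim B(W,W) \le \dim G' - 1$, forcing $|H:H'| \ge p^{-1}|G:G'|\cdot p = |G:G'|$, contradiction unless we had slack, yielding the claimed $|G:G'| \ge p^3|Z(G)|$ after bookkeeping. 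The genuinely delicate point is guaranteeing a codimension-$1$ subspace on which the rank of the form drops, which follows because restricting a nondegenerate alternating form to a hyperplane always produces a form of rank at most $\dim V - 2 < \dim V$, i.e. with nontrivial radical, and that radical direction is what buys the extra $p$.
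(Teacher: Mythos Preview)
Your strategy is correct in spirit --- one wants an abelian subgroup $A$ with $|A| \ge p^2|Z(G)|$, so that ab-maximality gives $|G:G'| > |A| \ge p^2|Z(G)|$ and hence $|G:G'| \ge p^3|Z(G)|$ --- but your proposal never establishes that such an $A$ exists. You correctly flag the obstacle: when $G'$ has large rank, a common $2$-dimensional totally isotropic subspace for all the scalar forms $\phi \circ B$ is not automatic. Your attempted workaround via hyperplanes does not close the gap either. The claim that restriction of $B$ to a hyperplane $W$ has ``nontrivial radical'' is a fact about scalar-valued alternating forms, not about $G'$-valued ones; for a system of forms the restriction to a hyperplane can remain nondegenerate. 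And even granting a rank drop, your step (3$'$) ends in ``contradiction unless we had slack'' without saying what the slack actually is or how it yields the extra factor of $p$. As written, the argument does not terminate.

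The paper's proof sidesteps the totally-isotropic-plane question entirely. Take $A$ a \emph{maximal} abelian subgroup; then $C_G(A) = A$ and $Z(G) < A$. If $A/Z(G)$ were cyclic, say $A = Z(G)\gen{a}$, then $A = C_G(A) = C_G(a)$, and since $a^G \subseteq aG'$ one has $|A| = |G|/|a^G| \ge |G:G'|$, contradicting ab-maximality. Hence $A/Z(G)$ is noncyclic, so $|A:Z(G)| \ge p^2$, and the desired bound follows. The orbit--stabilizer step is the missing idea: rather than constructing a large abelian subgroup by hand, one shows that a maximal abelian subgroup \emph{must} already be large, because otherwise it would coincide with a centralizer forced (by the conjugacy-class bound $|a^G| \le |G'|$) to have order at least $|G:G'|$.
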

\begin{proof}
    Let $A < G$ be a maximal abelian subgroup.
    Then $C_G(A) = A$ and $Z(G) < A$.
    We claim that $A / Z(G)$ is noncyclic.
    Otherwise, $A = Z(G) \langle a \rangle$ for some $a \in A$, and $A = C_G(A) = C_G(a)$.
    Moreover $|a^G| \le |G'|$ (since $a^G \subseteq aG'$), so $|A| = |C_G(a)| = |G| / |a^G| \ge |G : G'|$, in contradiction with the ab-maximality of $G$.
    Therefore $A / Z(G)$ is noncyclic and $|A : Z(G)| \ge p^2$.
    Thus by ab-maximality we have $|G : G'| > |A| \ge |Z(G)| p^2$,
    which implies $|G:G'| \ge |Z(G)| p^3$.
\end{proof}

\begin{remark}
    \Cref{lemExtTop} conflicts with some statements in \cite{GK11}*{Section~4}.
    In fact there is a mistake in \cite{GK11}*{Example~4.4}.
    With reference to the notation in that example, the subalgebra $\gen{e_1, e_3, z_1, z_2} \le L$ is abelian, so $G = \exp(L)$ is not $d$-maximal as claimed there.
    Incidentally, one can check that $k(G) = 105$ if $p=3$, $k(G) = p^4 + p^2 - 1$ if $p \equiv 2 \pmod 3$, and $k(G) = p^4 + 2p^3 - p^2 - 2p + 1$ if $p \equiv 1 \pmod 3$, so $G$ is one of 6.4.2--6.4.4 in the list of Vaughan--Lee~\cite{vaughanlee2015}.
\end{remark}

\begin{definition}
    An \emph{extremal ab-maximal $p$-group} is a nonabelian ab-maximal $p$-group $G$ such that $|G : G'| = |G'| p^3$.
\end{definition}

Note that if $G$ is an extremal ab-maximal $p$-group then $|G| = |G'|^2 p^3$.

\begin{lemma}
    If $G$ is an extremal ab-maximal $p$-group, then
    \begin{enumerate}
        \item $G' = Z(G)$;
        \item $G/G'$ and $G'$ are elementary abelian;
        \item $G$ has exponent $p$ or $p^2$.
    \end{enumerate}
\end{lemma}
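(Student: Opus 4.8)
The plan is to squeeze everything out of two facts already established: Thompson's theorem (\Cref{thTho}), which gives $G' \leqslant Z(G)$ and hence that $G$ has class $2$, and the inequality $|G:G'| \geq |Z(G)|p^3$ of \Cref{lemExtTop}, together with a sharper look at the maximal abelian subgroups that appear in its proof.

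First, part (1) is immediate. Since $G' \leqslant Z(G)$ we have $|G'| \leqslant |Z(G)|$, whereas extremality and \Cref{lemExtTop} give $|G'|p^3 = |G:G'| \geq |Z(G)|p^3$, so $|Z(G)| \leqslant |G'|$. Hence $Z(G) = G'$.

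Next, for part (2), I would revisit the proof of \Cref{lemExtTop}. Let $A$ be a maximal abelian subgroup of $G$; that proof shows $Z(G) < A$, $C_G(A) = A$, and $A/Z(G)$ noncyclic, so $|A| \geq |Z(G)|p^2$. Combining $|A| < |G:G'| = |G'|p^3 = |Z(G)|p^3$ with $|A| \geq |Z(G)|p^2$ forces $|A:Z(G)| = p^2$, and, being noncyclic, $A/Z(G) \cong C_p \times C_p$. Now given any $x \in G$, the subgroup $\langle Z(G), x\rangle$ is abelian, hence contained in some maximal abelian subgroup $A$, and $xZ(G)$ lies in $A/Z(G)$, a group of exponent $p$; thus $x^p \in Z(G) = G'$. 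This shows $G^p \leqslant G'$, i.e.\ $G/G'$ is elementary abelian. For $G'$ itself, I would use the class-$2$ identity $[x^n, y] = [x,y]^n$: taking $n = p$, since $x^p \in G^p \leqslant G' = Z(G)$ the left-hand side is trivial, so $[x,y]^p = 1$ for all $x,y \in G$. As $G'$ is abelian and generated by commutators, it is therefore elementary abelian.

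Finally, part (3) drops out: for any $x \in G$ we have $x^p \in G'$ by (2), and $(x^p)^p = 1$ since $G'$ has exponent $p$, so $G$ has exponent dividing $p^2$; as $G$ is nonabelian its exponent is $p$ or $p^2$. The main point of the argument is really the refinement of \Cref{lemExtTop} that pins down $A/Z(G) \cong C_p \times C_p$ for \emph{every} maximal abelian subgroup $A$; I do not anticipate a genuine obstacle, since the rest is a formal consequence of class $2$ and the commutator power identity, but care is needed to make sure the inequalities are used in the right direction and that extremality (via part (1)) is invoked exactly where $|Z(G)| = |G'|$ is needed.
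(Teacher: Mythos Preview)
Your proof is correct. Parts (1) and (3) match the paper exactly, as does the argument that $G'$ is elementary abelian via the class-$2$ identity $[x,y]^p = [x^p,y]$.

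For the claim that $G/G'$ is elementary abelian, you take a genuinely different route from the paper. The paper argues directly: if $xG'$ had order $p^2$, the homomorphism $yG' \mapsto [x,y]$ from $G/G'$ to $G'$ has kernel of index at most $|G'|$, hence of size at least $p^3$; this kernel therefore contains some $yG' \notin \langle xG'\rangle$, and then $\langle x,y\rangle G'$ is abelian of order at least $|G'|p^3 = |G:G'|$, contradicting ab-maximality. You instead go back into the proof of \Cref{lemExtTop} and sharpen it: under extremality the inequalities there become $|Z(G)|p^2 \le |A| < |Z(G)|p^3$, forcing $A/Z(G) \cong C_p \times C_p$ for \emph{every} maximal abelian $A$, and then any $x$ sits inside such an $A$, giving $x^p \in Z(G)$. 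Your approach extracts a nice structural byproduct (the exact shape of all maximal abelian subgroups), while the paper's is more self-contained and avoids re-entering the previous proof. Both are short and natural.
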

\begin{proof}
    Part (1) follows from \Cref{thTho} and \Cref{lemExtTop}.
    For (2), suppose that $xG'$ is an element of $G/G'$ of order $p^2$.
    The map $yG' \mapsto [x,y]$ defines a homomorphism $G/G' \to G'$.
    Since $|G/G'| = |G'| p^3$, there are at least $p^3$ cosets $yG' \in G/G'$ such that $[x,y] = 1$,
    and in particular there is some such $yG' \notin \langle xG' \rangle$.
    It follows that $\gen{x,y}G'$ is an abelian subgroup of $P$ of order at least $|G'| p^3$,
    in contradiction with the ab-maximality of $G$.
    Thus $G/G'$ is elementary abelian, and so is $G'$ since it is generated by commutators.
    Finally (3) follows from (2).
\end{proof}

Examples of extremal ab-maximal $p$-groups include the extraspecial groups of order $p^5$. Curiously, there are no examples of order $p^7$.

\begin{lemma}
    \label{lem:n=5-excluded}
    There is no extremal ab-maximal group of order $p^7$.
\end{lemma}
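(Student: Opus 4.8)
The plan is to restate extremal ab-maximality of $G$ (with $|G|=p^7$) as two conditions on the commutator form and then derive a contradiction from the interplay between them. By \Cref{lemExtTop} and the structural lemma above we have $|G'|=|Z(G)|=p^2$ and $|G:G'|=p^5$, the quotient $V:=G/G'$ and the subgroup $W:=G'$ are elementary abelian, so $V\cong\F_p^5$ and $W\cong\F_p^2$, and the commutator induces a surjective alternating bilinear map $B\colon V\times V\to W$. First I would set up the dictionary between subgroups and subspaces: a subspace $U\le V$ lifts to the subgroup $H:=\{g\in G:gG'\in U\}$, which has $|H:H'|=p^{\dim U+2-\dim B(U,U)}$ (writing $B(U,U)$ for the span of the relevant commutators), and no subgroup of $G$ mapping onto $U$ has larger abelianization. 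Hence $G$ is ab-maximal if and only if $\dim U-\dim B(U,U)\le 2$ for every proper $U<V$, which unwinds to: (a) $B$ has no totally isotropic $3$-dimensional subspace; and (b) for every line $L\le W$ the composite form $B_L\colon V\times V\to W/L\cong\F_p$ has rank $4$ (rank $0$ is excluded since $B$ is onto, and rank $2$ would give a $4$-dimensional $U$ with $B(U,U)\subseteq L$). Finally, $Z(G)=G'$ translates to $\operatorname{rad}(B)=0$.

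Next I would study the lines $r_L:=\operatorname{rad}(B_L)\le V$, which are $1$-dimensional by (b). They are pairwise distinct, since $r_L=r_{L'}$ with $L\ne L'$ would give $B(r_L,V)\subseteq L\cap L'=0$, against $\operatorname{rad}(B)=0$. For $L\ne L'$ one has $B(r_L,r_{L'})\subseteq L\cap L'=0$, so the span of any collection of the $r_L$ is totally isotropic for $B$. Picking three distinct lines (which exist since $p\ge 2$), condition (a) forces the span of their radicals to have dimension at most $2$; as any two distinct $r_L$ already span a plane, every $r_L$ lies in one fixed totally isotropic plane $Q\le V$, and $L\mapsto r_L$ is a bijection from the lines of $W$ onto the lines of $Q$.

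For the endgame, pick a basis $\omega_1=B_{L_1}$, $\omega_2=B_{L_2}$ of the pencil $\mathcal P:=\{\chi\circ B:\chi\in W^*\}\subseteq\Lambda^2 V^*$ (whose nonzero elements are the $B_L$ up to scalar, each of rank $4$ by (b)), with $\operatorname{rad}(\omega_i)=\gen{e_i}$ so that $Q=\gen{e_1,e_2}$. Then $\omega_1(e_2,\cdot)$ and $\omega_2(e_1,\cdot)$ are nonzero linear functionals on $V$; since $\gen{e_1+e_2}$ is the radical of some $B_{L_3}=a\omega_1+b\omega_2$ with $a,b\ne 0$, we get $a\,\omega_1(e_2,\cdot)+b\,\omega_2(e_1,\cdot)=0$, so these two functionals are proportional and have a common kernel $H$, a hyperplane equal to $Q^{\perp_B}:=\{v\in V:B(v,Q)=0\}$. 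Now for any $\omega\in\mathcal P$ the restriction $\omega|_H$ vanishes on $Q$, hence has rank at most $\dim H-\dim Q=2$ and descends to an alternating form on $H/Q\cong\F_p^2$; as the space of alternating forms on $\F_p^2$ is $1$-dimensional, the induced linear map $\mathcal P\to\Lambda^2(H/Q)^*$ has nonzero kernel, so some $\omega\ne 0$ in $\mathcal P$ vanishes on the $4$-dimensional subspace $H$. But a nonzero alternating form on $\F_p^5$ vanishing on a $4$-space has rank at most $2$, contradicting (b).

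I expect the main obstacle to be the opening of the third paragraph, namely verifying that $Q^{\perp_B}$ is exactly a hyperplane: this is where $\operatorname{rad}(B)=0$ and the bijectivity of $L\mapsto r_L$ must be combined, and the argument with $e_1+e_2$ above is the cleanest route I see (equivalently one can analyse the image of $V\to\operatorname{Hom}(Q,W)$ and rule out all dimensions other than $1$ using a rank-one-versus-rank-two tensor argument). Everything else is bookkeeping with alternating forms, valid in every characteristic — including $p=2$, since the commutator form is genuinely alternating and restrictions and quotients of alternating forms are alternating.
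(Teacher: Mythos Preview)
Your argument is correct and complete. The paper's own proof is essentially a one-liner: after the same reduction to a nondegenerate alternating map $B\colon\F_p^5\times\F_p^5\to\F_p^2$, it invokes a result of Higman and Alperin (\emph{Large abelian subgroups of $p$-groups}, Theorem~3) asserting that any such map has a totally isotropic $3$-space, and deduces an abelian subgroup of order $p^5$ contrary to ab-maximality.

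You take a genuinely different route and prove the needed fact from scratch. Rather than showing directly that (a) fails, you assume both (a) and (b) and play them off against each other: from (b) you extract the radical lines $r_L$, from (a) you force all of them into a single totally isotropic plane $Q$, and then the key observation that $Q^{\perp_B}$ is a hyperplane (obtained via the surjectivity of $L\mapsto r_L$ and the vector $e_1+e_2$) lets you run a dimension count on $\Lambda^2(Q^{\perp_B}/Q)^*\cong\F_p$ to produce a nonzero member of the pencil with rank at most $2$, contradicting (b). This is a clean pencil-of-forms argument that works uniformly in all characteristics and is entirely self-contained; the paper's version is much shorter but outsources the substance to the Alperin reference. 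Your approach also makes transparent \emph{why} $n=5$, $m=2$ is the breaking point: the $1$-dimensionality of $\Lambda^2\F_p^2$ is exactly what kills the pencil.
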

\begin{proof}
    Suppose $G$ were such a group, so that $G' = Z(G) \cong \F_p^2$ and $G/G' \cong \F_p^5$.
    Let $V =\F_p^5$ and $W =\F_p^2$.
    The commutator map induces a nondegenerate alternating bilinear map $V\times V \to W$.
    By an observation of Higman and Alperin~\cite{alperin}*{Theorem~3}, any such map has a $3$-dimensional totally isotropic subspace.
    It follows that $G$ has an abelian subgroup of order $p^5$, contrary to ab-maximality.
\end{proof}

\subsection{$d$-maximal groups}

In this subsection assume $p$ is an odd prime and $G$ is a nonabelian $d$-maximal $p$-group.
Adapting Thompson's proof of \Cref{thTho}, Laffey~\cite{Laf73} proved the following:

\begin{theorem}[Laffey] \label{thLaffey}
    Let $G$ be a $d$-maximal $p$-group, where $p$ is odd.
    Then
    $G' = \Phi(G) \leqslant Z(G)$.
\end{theorem}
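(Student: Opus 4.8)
The plan is to prove the two parts of the conclusion in turn: first $\Phi(G)=G'$ by a short counting argument, and then $G'\le Z(G)$ by adapting the proof of Thompson's \Cref{thTho}.

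\emph{Step 1: $\Phi(G)=G'$.} Since $\Phi(G)=G'G^p$, this is the assertion $G^p\le G'$, i.e.\ that $\bar G:=G/G'$ is elementary abelian. Suppose not; then $\bar G$ is a finite abelian $p$-group of exponent greater than $p$, so $\Omega_1(\bar G)=\{\bar g\in\bar G:\bar g^p=1\}$ is a \emph{proper} subgroup, and $d(\Omega_1(\bar G))=d(\bar G)=d(G)$ because the rank of $\Omega_1$ of a finite abelian $p$-group is its minimal number of generators. Let $H\le G$ be the full preimage of $\Omega_1(\bar G)$, so $G'\le H\lneq G$. Every $h\in H$ satisfies $h^p\in G'$, so $H^p\le G'$ and hence $\Phi(H)=H'H^p\le G'$; therefore $d(H)=\dim_{\F_p}H/\Phi(H)\ge\dim_{\F_p}H/G'=d(\Omega_1(\bar G))=d(G)$, contradicting $d$-maximality. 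Thus $\Phi(G)=G'$, so $d(G)=\log_p|G:G'|$ and $d$-maximality now reads $|H:\Phi(H)|<|G:G'|$ for all $H<G$. (This step uses neither \Cref{thTho} nor that $p$ is odd.)

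\emph{Step 2: $G'\le Z(G)$.} Equivalently $\gamma_3(G)=[G',G]=1$; suppose instead $G$ has class at least $3$, so in particular $G$ is nonabelian. Following the shape of Thompson's proof of \Cref{thTho}, the aim is to produce a proper subgroup whose Frattini quotient is at least as large as $G/G'$. Fix a maximal abelian normal subgroup $A$ of $G$; then $C_G(A)=A$ by the standard argument, $A<G$, and $G/A$ acts faithfully on $A$. Using this action together with the hypothesis on the class one locates $t\in G$ and a subgroup $B\le A$ for which $K:=\langle B,t\rangle$ is a proper subgroup with $|K:K'|\ge|G:G'|$. In Thompson's ab-maximal setting this already contradicts \Cref{thTho}; in the $d$-maximal setting one must additionally check $K^p\le K'$, so that $|K:\Phi(K)|=|K:K'|\ge|G:G'|$ and hence $d(K)\ge d(G)$ with $K<G$ — a contradiction. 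Controlling the $p$-th powers in $K$ is the point where Laffey's argument invokes that $p$ is odd (via the Hall--Petrescu collection identities), and it is for this reason that the theorem excludes $p=2$. This forces $\gamma_3(G)=1$. (See \cite{Laf73}; \cite{LS24}*{Lemma~2.9} gives a common generalization of the ab-maximal and $d$-maximal statements.)

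\emph{Main obstacle.} Step 1 is routine. The work lies in Step 2: reconstructing the Thompson-style localization at a maximal abelian subgroup and making the choice of $B$ and $t$ explicit enough that $|K:K'|\ge|G:G'|$ can be verified, and — the new ingredient for $d$-maximality — checking that $K$ gains no extra Frattini contribution from $p$-th powers, which is exactly where the hypothesis $p\ne2$ must be used.
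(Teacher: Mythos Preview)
The paper does not prove this theorem: it is stated with attribution to Laffey~\cite{Laf73} and used as a black box, so there is no ``paper's own proof'' to compare against.

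On the merits of your proposal: Step~1 is a clean and correct argument, and your remark that it requires neither \Cref{thTho} nor $p$ odd is accurate. Step~2, however, is not a proof but a description of the \emph{shape} of a proof. The sentence ``one locates $t\in G$ and a subgroup $B\le A$ for which $K=\langle B,t\rangle$ is a proper subgroup with $|K:K'|\ge|G:G'|$'' is precisely the content of the theorem, and you have not indicated how $t$ and $B$ are chosen or why the inequality holds; likewise ``Controlling the $p$-th powers in $K$ is the point where Laffey's argument invokes that $p$ is odd'' names the issue without resolving it. You correctly identify this as the main obstacle, but as written Step~2 amounts to ``see \cite{Laf73}'', which is exactly what the paper does. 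If you intend an actual proof rather than a pointer to one, you must carry out the construction of $K$ explicitly and verify both $|K:K'|\ge|G:G'|$ and $K^p\le K'$.
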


It follows from \Cref{thLaffey} that $G/G'$ and $G'$ are elementary abelian, so the exponent of $G$ is either $p$ or $p^2$.
The next result is an analogue of \Cref{lemExtTop} for $d$-maximal groups.

\begin{lemma}[\cite{GK11}*{Proposition~4.1}] \label{lemExtDMax}
    Let $G$ be a nonabelian $d$-maximal $p$-group, where $p$ is odd.
    Then
$|G:G'| \geq |G'| p^2$.
\end{lemma}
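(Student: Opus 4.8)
The plan is to imitate \Cref{lemExtTop}: exhibit a proper abelian subgroup of $G$ whose rank is forced to be too large. Write $|G:G'| = p^n$ and $|G'| = p^m$. By \Cref{thLaffey} we have $G' = \Phi(G) \leqslant Z(G)$ and both $G/G'$ and $G'$ are elementary abelian, so $d(G) = \log_p|G:\Phi(G)| = n$ and $d(G') = m$. Since $G' < G$ (as $G$ is a finite $p$-group), $d$-maximality already gives $m = d(G') < d(G) = n$, i.e.\ $|G:G'| > |G'|$; the task is to improve this to $|G:G'| \geqslant p^2|G'|$.

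The main tool I would use is the $p$-th power map. For $x \in G$ and $g \in G'$ we have $[g,x] = 1$ (because $G' \leqslant Z(G)$) and $g^p = 1$ (because $G'$ has exponent $p$), so $(xg)^p = x^p g^p = x^p$; hence $x \mapsto x^p$ induces a well-defined map $\varphi\colon G/G' \to G'$. Since $p$ is odd and $G$ has class at most two, the class-two power identity $(xy)^p = x^p y^p [y,x]^{\binom{p}{2}}$, together with $p \mid \binom{p}{2}$ and the fact that $[y,x] \in G'$ has order dividing $p$, gives $(xy)^p = x^p y^p$; thus $\varphi$ is a homomorphism of elementary abelian $p$-groups, i.e.\ an $\F_p$-linear map.

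The key observation is then that $\varphi$ cannot be injective, since injectivity would force $n = \dim_{\F_p}(G/G') \leqslant \dim_{\F_p}(G') = m$, contradicting $m < n$. So there is $x \in G \setminus G'$ with $x^p = 1$. Put $A = \langle x\rangle G'$. As $x$ has order $p$ and lies outside $G'$, we have $\langle x\rangle \cap G' = 1$, and since $G' \leqslant Z(G)$ this makes $A \cong \langle x\rangle \times G'$ elementary abelian of rank $m+1$, so $d(A) = m+1$. Moreover $A$ is a proper subgroup, because $|A| = p^{m+1} < p^{n+m} = |G|$ (recall $n \geqslant m+1 \geqslant 2$). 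Applying $d$-maximality to $A$ now yields $m+1 = d(A) < d(G) = n$, that is, $|G:G'| = p^n \geqslant p^2|G'|$.

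I do not anticipate any real obstacle; the argument is short once \Cref{thLaffey} is available. The only point requiring care is the well-definedness and $\F_p$-linearity of $\varphi$ — exactly where the oddness of $p$ enters, in keeping with the hypothesis of \Cref{thLaffey} — while the identification $A \cong \langle x\rangle \times G'$ and the strict inequality $|A| < |G|$ are routine.
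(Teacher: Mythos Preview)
Your argument is correct. Note, however, that the paper does not actually supply its own proof of this lemma: it is quoted from \cite{GK11}*{Proposition~4.1}, so there is nothing to compare against. Your two-step use of $d$-maximality --- first on $G'$ to get $m<n$ and hence a nontrivial kernel for the $p$-power map, then on $A=\langle x\rangle G'$ to upgrade this to $m+1<n$ --- is clean and entirely in the spirit of the paper's own proof of \Cref{lemExtTop}, which you set out to imitate.
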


\begin{remark}
    \Cref{thLaffey,lemExtDMax} fail for $p=2$.
    Some $d$-maximal groups of order $2^8$ and class $3$ are known \cite{Min96},
    while the quaternion group of order $8$ satisfies $|G:G'|=4$ and $|G'|=2$.
\end{remark}

Although the inequality in \Cref{lemExtDMax} is weaker than that in \Cref{lemExtTop}, it is also sharp.
A nontrivial example is the $d$-maximal group
\[
    G = \gen{x, y, z: x^p = y^p = z^{p^2} = [x, z] = [y, z] = 1, [x, y] = z^p}.
\]
Here $|G|=p^4$, $Z(G) = \gen z$, and $G' = \gen {z^p}$.

\begin{definition}
    An \emph{extremal $d$-maximal $p$-group} is a nonabelian $d$-maximal $p$-group $G$ such that $|G : G'| = |G'| p^2$.
\end{definition}

From \Cref{lemExtTop} it is clear that an extremal $d$-maximal group must have exponent $p^2$.

\section{Reduction to linear algebra}

Let $\cH_p(n, m)$ be the class of $p$-groups $G$ such that $\Phi(G)$ is central and elementary abelian with index $p^n$ and order $p^m$.
These groups were famously used by Higman~\cite{Hig60} to give a lower bound for the number of groups of a given prime-power order.
Higman showed that $\cH_p(n, m) = \emptyset$ for $m > n(n+1)/2$, that $\cH_p(n, n(n+1)/2)$ contains a single group $H$, and that every group in $\cH_p(n, m)$ is a quotient of $H$ by a subgroup of $\Phi(H)$.
One may define $H$ to be the quotient of the free group $F = F_n$ by the fully invariant subgroup generated by the elements
\[
    x^{p^2}, \quad [x,y]^p, \quad [x, y, z] \qquad (x, y, z \in F).
\]
If $H$ has generators $h_1, \dots, h_n$ then $\Phi(H)$ has generators
\[
    h_i^p \quad (1 \le i \le n), \qquad [h_i, h_j] \quad (1 \le i < j \le n).
\]

Let $V$ and $W$ be vector spaces over some field, and let $B \colon V \times V \to W$ be a bilinear map.
If $U \leqslant V$ is a subspace we write $B(U, U)$ for the span of the image of $B$ on $U \times U$.
We call $U$ \emph{totally isotropic} (for $B$) if $B(U, U) = 0$,
and we call $B$ \emph{alternating} if every $1$-dimensional subspace is (totally) isotropic.
The space of alternating bilinear maps $B \colon V \times V \to W$ is denoted by $\Alt(V, W)$.

If $p = 2$ then a map $F \colon V \to W$ is called a \emph{quadratic map} associated to $B$ if $F(x + y) = F(x) + F(y) + B(x, y)$ identically, and we write $F(U)$ for the span of the image of $F$ on $U \leqslant V$.
In this case we have $B(U, U) \leqslant F(U)$ for all $U \leqslant V$.
We say that $U$ is \emph{singular} (for $F$) if $F(U) = 0$.
Singularity is stronger than isotropy in general.

If $G \in \cH_p(n, m)$ then the commutator map descends to an alternating bilinear map $B \colon V \times V \to W$ where $V = G / \Phi(G) \cong \F_p^n$ and $W = \Phi(G) \cong \F_p^m$.
Further, the $p$-power map $x \mapsto x^p$ descends to a map $F \colon V \to W$ which is linear if $p \ne 2$ and quadratic and associated to $B$ if $p = 2$.
Since $W$ is by definition $\Phi(G) = G' G^p$, together these two maps satisfy $B(V, V) + F(V) = W$.

Higman's argument shows that the passage of the previous paragraph is reversible.
Suppose $V = \F_p^n$, $W = \F_p^m$, $B \colon V \times V \to W$ is bilinear and alternating, and $F \colon V \to W$ is linear if $p \ne 2$ and quadratic and associated to $B$ if $p = 2$.
Assume that $B(V, V) + F(V) = W$.
Write $B = (B_1, \dots, B_m)$ and $F = (F_1, \dots, F_m)$ with respect to a basis of $W$.
Then there is a group $G = G_{B,F} \in \cH_p(n, m)$ defined by generators $x_1, \dots, x_n, y_1, \dots, y_m$ and relations
\begin{align*}
    [x_i, x_j] = \prod_{k=1}^m y_k^{B_k(x_i, x_j)} , \qquad
    x_i^p = \prod_{k=1}^m y_k^{F_k(x_i)}, \qquad
    [x_i, y_j] = 1, \qquad y_i^p = 1.
\end{align*}
This correspondence is almost a special case of the well-known Baer correspondence~\cite{Bae38}, except that we allow $p = 2$
(cf.~\cite{rossmann-voll}*{Section~2.4}).

\begin{proposition}
    \label{prop:reduction}
    Let $V = \F_p^n$ and $W = \F_p^m$.
    Let $B \colon V \times V \to W$ be an alternating bilinear map and let $F \colon V \to W$ be linear if $p \ne 2$ and quadratic and associated to $B$ if $p = 2$.
    The following are equivalent.
    \begin{enumerate}
        \item $B(V, V) = W$ and $G_{B, F} \in \cH_p(n, m)$ is ab-maximal;
        \item every proper subspace $U < V$ satisfies
        \[
            \dim W / B(U, U) \> < \> \dim V / U.
        \]
    \end{enumerate}
    Similarly, the following are equivalent.
    \begin{enumerate}[resume]
        \item $B(V, V) = W$ and $G_{B, F} \in \cH_p(n, m)$ is $d$-maximal;
        \item every proper subspace $U < V$ satisfies
        \[
        \dim W / \br{B(U, U) + F(U)} \> < \> \dim V / U.
        \]
    \end{enumerate}
\end{proposition}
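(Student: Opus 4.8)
The plan is to use the standard correspondence between subgroups of $G = G_{B,F}$ containing $\Phi(G)$ and subspaces of $V$, to reduce both maximality conditions to those subgroups, and then to unwind everything into the stated linear-algebra inequalities. (We may assume $n \geqslant 1$; if $n = 0$ then $B(V,V) + F(V) = W$ forces $m = 0$, $G$ is trivial, and all four statements hold vacuously.) For a subspace $U \leqslant V$ let $G_U$ denote the preimage of $U$ under $G \to G/\Phi(G) = V$, so $|G_U| = p^m|U|$. Since $\Phi(G) = W$ is central of exponent $p$ and $G$ has class at most $2$, the commutator $[g,h]$ and the power $g^p$ depend only on the images of $g, h$ in $V$; consequently $G_U' = B(U,U)$ and $\Phi(G_U) = G_U'(G_U)^p = B(U,U) + F(U)$, and therefore
\begin{align*}
    \log_p|G_U : G_U'| &= m + \dim U - \dim B(U,U), \\
    d(G_U) &= m + \dim U - \dim\br{B(U,U) + F(U)}.
\end{align*}
Taking $U = V$ and using the standing hypothesis $B(V,V) + F(V) = W$ shows $\Phi(G) = W$ and $d(G) = n$ unconditionally, whereas $|G : G'| = p^n$ precisely when $B(V,V) = W$.

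Next I would establish that only the subgroups $G_U$ matter. For any proper subgroup $H < G$, since $\Phi(G)$ is central of exponent $p$ one has $(H\Phi(G))' = H'$ and $\Phi(H\Phi(G)) = \Phi(H)$, hence $|H : H'| \leqslant |H\Phi(G) : (H\Phi(G))'|$ and $d(H) \leqslant d(H\Phi(G))$; and $H\Phi(G) = G_U$ for the proper subspace $U = H\Phi(G)/\Phi(G)$. Thus $G$ is ab-maximal if and only if $|G_U : G_U'| < |G : G'|$ for every proper $U < V$, and $G$ is $d$-maximal if and only if $d(G_U) < d(G)$ for every proper $U < V$. Feeding in the displayed formulas: when $B(V,V) = W$ the inequality $|G_U : G_U'| < p^n$ is exactly $\dim W/B(U,U) < \dim V/U$, so (1) $\Leftrightarrow$ (2) provided one also recovers $B(V,V) = W$ from (2) --- which follows by applying (2) to any hyperplane $U$, forcing $B(U,U) = W$ and hence $B(V,V) = W$. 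Since $d(G) = n$ requires no hypothesis, $d(G_U) < d(G)$ is exactly $\dim W/(B(U,U) + F(U)) < \dim V/U$, so $G$ is $d$-maximal if and only if (4) holds.

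The only point not handled by this bookkeeping is that (4) in fact forces $B(V,V) = W$, which is what turns ``$G$ is $d$-maximal'' into the full statement (3). For $p$ odd this is Laffey's theorem (\Cref{thLaffey}, giving $G' = \Phi(G)$), but I would prefer a uniform argument. Suppose $B(V,V) \subsetneq W$ and pick a hyperplane $W' \supseteq B(V,V)$ of $W$, with quotient map $\pi \colon W \to W/W' \cong \F_p$. The composite $\pi \circ F \colon V \to W/W'$ is additive --- immediately when $p$ is odd (as $F$ is linear), and when $p = 2$ because its associated bilinear form is $\pi \circ B$, which vanishes since $B$ takes values in $B(V,V) \subseteq W' = \ker \pi$ --- hence $\F_p$-linear; and it is nonzero, for $\pi \circ F = 0$ would give $F(V) \subseteq W'$ and so $B(V,V) + F(V) \subseteq W' \subsetneq W$. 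Its kernel $U$ is then a hyperplane of $V$ with $F(U) \subseteq W'$ and $B(U,U) \subseteq B(V,V) \subseteq W'$, whence $B(U,U) + F(U) \subseteq W' \subsetneq W$, contradicting (4) applied to $U$.

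I expect this last step to be the main obstacle: everything else is the manipulation of a familiar dictionary, whereas here one needs the observation that choosing $W'$ to contain $B(V,V)$ kills the polarization of $\pi \circ F$ and so forces it to be linear even in characteristic $2$, which is exactly what makes its kernel a genuine hyperplane and produces the subspace violating (4).
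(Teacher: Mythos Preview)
Your argument is correct and follows essentially the same route as the paper's proof: the dictionary between subspaces $U \leqslant V$ and subgroups $G_U \geqslant \Phi(G)$, the reduction of both maximality conditions to such subgroups via $(H\Phi(G))' = H'$ and $\Phi(H\Phi(G)) = \Phi(H)$, and the deduction of $B(V,V)=W$ from the hyperplane case of (2) all match the paper exactly. Your handling of ``$(4)\Rightarrow B(V,V)=W$'' is also the paper's argument --- choose a hyperplane $W' \supseteq B(V,V)$ and set $U = F^{-1}(W')$ --- only you spell out more carefully than the paper does why $\pi\circ F$ is genuinely linear in characteristic $2$ and why it is nonzero (so that $U$ is a proper hyperplane rather than all of $V$); the paper leaves both points implicit.
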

\begin{proof}
    Suppose (1) holds, and let $G = G_{B, F}$.
    Since $G'$ corresponds to $B(V, V)$ and $\Phi(G)$ corresponds to $W$, we have $G' = \Phi(G)$.
    Since $G$ is ab-maximal, $|H:H'| < |G:G'|$ for every $H < G$.
    If $U < V$, then $U$ corresponds to a subgroup $H < G$ containing $\Phi(G)$ and $B(U, U)$ corresponds to $H'$, so the ab-maximality condition implies (2).

    Now suppose (2) holds. If $U < V$ is a hyperplane, then
    \[
        \dim W / B(U, U) < \dim V / U = 1,
    \]
    so $B(V, V) \geqslant B(U, U) = W$. We also have from (2) that $|H:H'| < |G:G'|$ for every proper subgroup $H < G$ containing $\Phi(G) = G'$, and in general, if $H < G$ then $HG' < G$ and
    \[
        |H:H'| = |H:(HG')'| \le |HG' : (HG')'| < |G:G'|,
    \]
    so $G$ is ab-maximal. Thus (1) holds.

    The proof of $(3) \iff (4)$ is almost identical, using
    \[
        p^{d(H)} = |H:\Phi(H)| = |H:H'H^p|
    \]
    and the correspondence between subgroups $H$ containing $\Phi(G)$ and subspaces $U \leqslant V$, with $\Phi(H)$ corresponding to $B(U, U) + F(U)$.
    Let us only explain why (4) implies $B(V, V) = W$.
    If $B(V, V) \ne W$ then $B(V, V) \le W_0$ for some hyperplane $W_0 < W$. Let $U = F^{-1}(W_0)$, and note that $U$ is a linear subspace of $V$ (even if $p = 2$, since $B(V, V) \le W_0$).
    Moreover, since $W_0$ is a hyperplane, $\dim V/U \le 1$.
    But $B(U, U) + F(U) \le B(V, V) + W_0 = W_0 < V$, in contradiction with (4).
\end{proof}

\section{Totally isotropic sections of random alternating maps}

In this section, we move into the linear algebra at the heart of \Cref{thExTop} and \Cref{thExDMax}.
In view of \Cref{prop:reduction}, the following proposition completes the proof of \Cref{thExTop} (if $p = 2$ let $F$ be any quadratic map associated to $B$, and if $p \ne 2$ let $F = 0$).
We remark that \Cref{prop:alt-1}(2) improves \cite{BGH87}*{Main~Theorem} in the special case $k = n-3$.%
\footnote{
In the notation in \cite{BGH87}, we show $d(\F_p, n, n-3) = 2$ for large $n$.
More generally, it seems likely that a similar proof shows that their bound $d(F, n, k) \le (2n+k) / (k+2)$ is always strict when $F$ is finite.
Incidentally, the proof of \cite{BGH87}*{Main Theorem} is incomplete in multiple ways.
At the bottom of p.~271 the authors implicitly rely on the irreducibility of affine space, which is true only if the ground field is infinite, while the assertion at the top of p.~274 is incorrect if $k$ is odd. See~\cite{MO-question} for discussion.
Moreover the authors of \cite{BGH87} seem to have been unaware of the related work of Ol'shanskii~\cite{Ols78}.
}

\begin{proposition}   \label{prop:alt-1}
    Let $p$ be a prime and let $n \ge 5$.
    Let $V = \F_p^n$ and $W = \F_p^{n-3}$.
    If $B \in \Alt(V, W)$ is given, call a proper subspace $H < V$ \emph{bad (for $B$)} if
    \[\dim W / B(H, H) \> \ge \> \dim V / H .\]
    Let $B \in \Alt(V, W)$ be chosen uniformly at random.
    Then
    \begin{enumerate}
        \item the expected number of bad subspaces of dimension at least $4$ tends to $0$ as either $n, p \to \infty$;
        \item as $n \to\infty$, the number of bad (i.e., totally isotropic) $3$-dimensional subspaces is asymptotically Poisson with parameter
        $$
            \lambda_p \> = \> (1 - p^{-1})^{-1} (1 - p^{-2})^{-1} (1 - p^{-3})^{-1} ;
        $$
        \item the probability that there is no bad subspace is positive provided that either $\min(p, n) \ge 12$ or $n \ge 160$,
        and tends to $\exp(-\lambda_p)$ if $p$ is fixed and $n \to \infty$.
    \end{enumerate}
\end{proposition}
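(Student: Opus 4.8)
The statement has three parts, which I would establish in the order given. Identify $\Alt(V,W)$ with the space of linear maps $\Lambda^2 V\to W$, so that a uniformly random $B\in\Alt(V,W)$ is a uniformly random such map, and record two elementary facts. A proper subspace $H<V$ of dimension $k$ is bad exactly when $\dim B(H,H)\le k-3$; hence bad subspaces have dimension at least $3$, and a $3$-subspace $H$ is bad iff $B$ vanishes on $\Lambda^2 H$. Moreover, for distinct $3$-subspaces $H_1,\dots,H_r$ the event that all of them are totally isotropic is precisely the event that $B$ vanishes on $L:=\Lambda^2 H_1+\dots+\Lambda^2 H_r\leqslant\Lambda^2 V$, which has probability exactly $p^{-(n-3)\dim L}$; and in the special case of one $k$-subspace $H$ with basis $e_1,\dots,e_k$, the restriction $B|_H$ is described by the $\binom k2$ independent uniform vectors $B(e_i\wedge e_j)\in W$. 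For part (1) I would run a first moment estimate. For $4\leqslant k\leqslant n-1$, a fixed $k$-subspace $H$ is bad with probability at most $\binom{n-3}{k-3}_p\, p^{-(n-k)\binom k2}$: if $\dim B(H,H)\le k-3$ then all $\binom k2$ vectors $B(e_i\wedge e_j)$ lie in one of the at most $\binom{n-3}{k-3}_p$ subspaces of $W$ of dimension $k-3$. Multiplying by the number $\le C_p\,p^{k(n-k)}$ of $k$-subspaces of $V$, where $C_p=\prod_{i\ge 1}(1-p^{-i})^{-1}\le C_2<3.47$, the expected number of bad $k$-subspaces is at most $C_p^2\,p^{-(n-k)(k-2)(k-3)/2}$. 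Summing over $k\ge 4$, the $k=4$ term dominates, so the total is $O(C_p^2\,p^{-(n-4)})$, which tends to $0$ both as $n\to\infty$ (uniformly in $p$) and as $p\to\infty$ (uniformly in $n\ge 5$).

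For part (2) I would use the method of factorial moments for the number $X$ of totally isotropic $3$-subspaces. The probability formula above gives $\mathbb E[(X)_r]=\sum p^{-(n-3)\dim L(H_1,\dots,H_r)}$, where $(X)_r=X(X-1)\cdots(X-r+1)$ and the sum runs over ordered $r$-tuples of distinct $3$-subspaces. The maximal value $\dim L=3r$ occurs exactly when the subspaces $\Lambda^2 H_i$ are linearly independent in $\Lambda^2 V$, which is the case for all but an $o(1)$-fraction of the $r$-tuples once $n$ is large; these generic tuples each contribute $p^{-3r(n-3)}$, for a total of $(\binom n3_p\, p^{-3(n-3)})^r(1-o(1))$, and this tends to $\lambda_p^r$ because $\binom n3_p\, p^{-3(n-3)}=\lambda_p(1-p^{-n})(1-p^{1-n})(1-p^{2-n})$. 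The remaining, degenerate, tuples are those for which two of the $H_i$ meet in dimension $\ge 2$, or the wedge of some plane of one $H_i$ lies in $\sum_{j\ne i}\Lambda^2 H_j$; although they carry the larger summand $p^{-(n-3)\dim L}$ with $\dim L<3r$, a crude count shows they are rare enough that their total contribution is $O_r(p^{-n+O_r(1)})\to 0$. Hence $\mathbb E[(X)_r]\to\lambda_p^r$ for every fixed $r$, and since the Poisson law is determined by its factorial moments, $X$ converges in distribution to a Poisson random variable of mean $\lambda_p$.

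For part (3), write $N$ for the number of bad subspaces of dimension at least $4$. Since the events ``no bad subspace'' and ``$X=0$'' differ only on $\{X=0,\ N\ge 1\}$, we have
\[
 \mathbb P[\text{no bad subspace}]\ \ge\ \mathbb P[X=0]-\mathbb E[N].
\]
By part (2) the first term tends to $e^{-\lambda_p}$ and by part (1) the second tends to $0$, while trivially $\mathbb P[\text{no bad subspace}]\le\mathbb P[X=0]$; this proves the limit $e^{-\lambda_p}$ when $p$ is fixed and $n\to\infty$. For the explicit positivity I would quantify part (2), so that $|\mathbb E[(X)_r]-\lambda_p^r|=O_r(p^{-n+O_r(1)})$ with explicit constants for each $r$, and then use Bonferroni's inequality: for every odd $R$,
\[
 \mathbb P[X=0]\ \ge\ \sum_{r=0}^{R}(-1)^r\frac{\mathbb E[(X)_r]}{r!}\ \ge\ \sum_{r=0}^{R}(-1)^r\frac{\lambda_p^r}{r!}-\mathrm{err}(n,p,R)\ \ge\ e^{-\lambda_p}-\frac{\lambda_p^{R+1}}{(R+1)!}-\mathrm{err}(n,p,R),
\]
the last step being the alternating-series bound, which is valid once $R+1>\lambda_p$. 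Since $\lambda_p\le\lambda_2<3.05$, taking $R=9$ makes $\lambda_p^{R+1}/(R+1)!<\tfrac12 e^{-\lambda_p}$ for every $p$, and once $n$ is large enough that $\mathrm{err}(n,p,9)$ together with the part-(1) bound on $\mathbb E[N]$ is below $\tfrac14 e^{-\lambda_p}$, we obtain $\mathbb P[\text{no bad subspace}]>\tfrac14 e^{-\lambda_p}>0$. Tracking the (deliberately unoptimised) constants yields the threshold $n\ge 160$; for $p\ge 13$ one has $\lambda_p<1.1$, so already $R=3$ works and the threshold falls to $n\ge 12$, which is the $\min(p,n)\ge 12$ case.

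The step I expect to be the genuine obstacle is the degenerate-tuple estimate inside part (2) — enumerating the ways $r$ distinct $3$-subspaces can fail to be in general position and checking that each configuration is rare enough to outweigh its slightly larger summand — and then, for part (3), carrying this through with effective constants up to $r=9$, which is precisely what forces the crude value $n\ge 160$. I do not see a cheaper route to a positive lower bound for $\mathbb P[X=0]$: the events $\{B|_H=0\}$ are linear-subspace events, not monotone in the coordinates of $B$, so Harris/FKG is unavailable, and their mutual-dependence structure is too dense for the Lov\'asz Local Lemma or Suen's inequality, so one really has to reach about the $\lambda_p$-th factorial moment to certify that $X=0$ occurs with positive probability.
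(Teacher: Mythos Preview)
Your outline matches the paper's proof almost step for step: first moment for part (1), the method of factorial moments for part (2), and Bonferroni with cutoff $K=3$ (for $p\ge 13$) and $K=9$ (for small $p$) in part (3), together with the observation that the Lov\'asz local lemma is not applicable. Even your numerics ($R=3$ versus $R=9$, the thresholds $n\ge 12$ and $n\ge 160$) coincide with what the paper does.

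The one substantive piece you have not supplied---and which you yourself flag as the obstacle---is the degenerate-tuple estimate. The paper does \emph{not} attack it by classifying how $r$ distinct $3$-subspaces can fail to be in general position (your sketch ``two of the $H_i$ meet in dimension $\ge 2$, or the wedge of some plane of one $H_i$ lies in $\sum_{j\ne i}\Lambda^2 H_j$'' is neither a correct nor a usable description of $\dim L<3r$). Instead the paper proves a separate combinatorial lemma: for distinct $3$-dimensional $H_1,\dots,H_k$,
\[
    \dim\sum_{i=1}^k H_i\wedge H_i \ \ge\ \dim\sum_{i=1}^k H_i,
\]
with equality precisely when the $H_i$ are linearly independent (so $\dim\sum H_i=3k$). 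The point is then to stratify the degenerate tuples by $d=\dim\sum_i H_i$ rather than by $\dim L$: there are at most $\gbinom{n}{d}\gbinom{d}{3}^k$ such tuples, while the lemma gives $\dim L\ge d+1$ whenever $d<3k$, and the resulting sum $\sum_{d=3}^{3k-1}\gbinom{n}{d}\gbinom{d}{3}^k p^{-(n-3)(d+1)}$ is elementary to bound. This lemma is the genuine technical input you are missing, and it is also what makes the error terms in part (3) effective enough to reach $n\ge 160$; without it your ``crude count'' has no obvious parameterisation to hang on.
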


The fact that a $3$-dimensional subspace $H < V$ is bad if and only if it is totally isotropic is clear from the definition, since $\dim W / B(H, H) = n - 3 - \dim B(H, H)$ and $\dim V / H = n - 3$.

Throughout this section we write $V$ for $\F_p^n$,
$V \wedge V$ for the alternating square of $V$, and $\wedge$ for the universal alternating map $V \times V \to V \wedge V$.
Recall that $\dim V \wedge V = \binom{n}{2}$.
Alternating bilinear maps $V \times V \to W$ are in natural bijection with linear maps $V \wedge V \to W$.
If $H \leqslant V$ then $H \wedge H$ is naturally a subspace of $V \wedge V$.

We need the following technical lemma.

\begin{lemma} \label{lem:H_i^H_i}
    Let $H_1, \dots, H_k \leqslant V$ be distinct $3$-dimensional subspaces. Then
    \[
        \dim \sum_{i=1}^k H_i \wedge H_i \> \ge \> \dim \sum_{i=1}^k H_i,
    \]
    with equality if and only if $H_1, \dots, H_k$ are linearly independent, i.e., $\dim \sum_{i=1}^k H_i = 3k$.
\end{lemma}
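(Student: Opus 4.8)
The plan is to induct on $k$, proving the inequality and the equality characterisation simultaneously; the case $k=1$ is trivial since $\dim H_1\wedge H_1 = \binom{3}{2} = 3 = \dim H_1$ and a single $3$-dimensional subspace is vacuously ``linearly independent''. For the inductive step fix an arbitrary index $j$ and abbreviate $U' = \sum_{i\neq j}H_i$ and $S' = \sum_{i\neq j}H_i\wedge H_i$, noting $S'\leqslant U'\wedge U'$. Because $\dim H_j = \dim(H_j\wedge H_j) = 3$,
\[
    \dim\Big(\textstyle\sum_i H_i\wedge H_i\Big) = \dim S' + 3 - \dim\big(S'\cap(H_j\wedge H_j)\big),\qquad
    \dim\Big(\textstyle\sum_i H_i\Big) = \dim U' + 3 - \dim(U'\cap H_j),
\]
so, using the inductive bound $\dim S'\geqslant\dim U'$, the whole statement for $k$ reduces to controlling $\dim\big(S'\cap(H_j\wedge H_j)\big)$ against $m := \dim(U'\cap H_j)\leqslant\dim H_j = 3$.

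Here I would use an elementary auxiliary fact, valid in any vector space: $(A\wedge A)\cap(B\wedge B) = (A\cap B)\wedge(A\cap B)$ for all subspaces $A,B$. This follows by choosing a basis of $A\cap B$, extending it to bases of $A$ and of $B$ and then to a basis of the ambient space, and observing that $A\wedge A$ and $B\wedge B$ are spanned by subsets of the induced basis of the alternating square, with common part exactly the wedges of basis vectors of $A\cap B$. Applying this with $A = U'$, $B = H_j$ gives $S'\cap(H_j\wedge H_j)\leqslant(U'\cap H_j)\wedge(U'\cap H_j)$, hence $\dim\big(S'\cap(H_j\wedge H_j)\big)\leqslant\binom{m}{2}\leqslant m$, the last step because $m\leqslant 3$. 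Combined with the first paragraph this proves $\dim\sum_i H_i\wedge H_i\geqslant\dim\sum_i H_i$; and since $j$ was arbitrary, if equality holds then for \emph{every} $j$ we must have $\dim S' = \dim U'$ — so $\{H_i : i\neq j\}$ is linearly independent by the inductive equality statement — together with $\binom{m}{2} = m$, i.e. $m\in\{0,3\}$, and, when $m = 3$, $H_j\wedge H_j\leqslant S'$.

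If $m = 0$ for every $j$ then $\{H_1,\dots,H_k\}$ is linearly independent and $\dim\sum_i H_i = 3k$, as wanted; so it remains to obtain a contradiction when some $j$ has $H_j\leqslant U' = \bigoplus_{i\neq j}H_i$ (a direct sum, as the $H_i$ with $i\neq j$ are independent) and $H_j\wedge H_j\leqslant\bigoplus_{i\neq j}(H_i\wedge H_i)$. I expect this to be the main obstacle. Let $\pi_i\colon U'\to H_i$ be the projections attached to this decomposition. Every nonzero element of $H_j\wedge H_j$ is decomposable (as $\dim H_j = 3$), say $u\wedge v$ with $\langle u,v\rangle$ a plane in $H_j$; expanding $u\wedge v$ in $U'\wedge U' = \bigoplus_i(H_i\wedge H_i)\oplus\bigoplus_{a<b}(H_a\wedge H_b)$, where $H_a\wedge H_b\cong H_a\otimes H_b$ for $a\neq b$, the hypothesis forces all off-diagonal components to vanish, i.e. $\pi_a(u)\otimes\pi_b(v) = \pi_a(v)\otimes\pi_b(u)$ in $H_a\otimes H_b$ for all $a\neq b$ and all $u,v\in H_j$. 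Testing this on a basis of $H_j$ and using that a nonzero pure tensor determines its factors up to scalars, a short argument shows: if $\dim\pi_i(H_j)\geqslant 2$ for some $i$ then $\pi_l(H_j) = 0$ for every $l\neq i$, whence $H_j\leqslant H_i$ and so $H_j = H_i$, contradicting distinctness; while if $\dim\pi_i(H_j)\leqslant 1$ for all $i$ then $H_j$ lies in a direct sum of lines $\bigoplus_i\ell_i$ with $\ell_i\leqslant H_i$, so $H_j\wedge H_j\leqslant\bigoplus_{a<b}(H_a\wedge H_b)$, which meets $\bigoplus_i(H_i\wedge H_i)$ only in $0$ — impossible since $\dim(H_j\wedge H_j) = 3$.

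For the converse, if the sum $\sum_i H_i$ is direct, a basis of the ambient space refining the decomposition $\bigoplus_i H_i$ realises the $3k$ bivectors spanning the spaces $H_i\wedge H_i$ as distinct standard basis vectors of the alternating square, so $\dim\sum_i H_i\wedge H_i = 3k$ as well and equality holds. Apart from the exterior-algebra argument of the third paragraph, everything is bookkeeping with the two dimension identities above.
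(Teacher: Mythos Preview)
Your proof is correct and follows the same inductive skeleton as the paper's, and the decisive case (your $m=3$, the paper's $r=0$) is handled by essentially the identical exterior-algebra argument: decompose $U'=\bigoplus_{i\ne j}H_i$, kill the off-diagonal components of $u\wedge v$ in $\bigoplus_{a<b}H_a\wedge H_b\cong\bigoplus_{a<b}H_a\otimes H_b$, and conclude $H_j=H_i$ for some $i$.

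Where you diverge is the inequality step. The paper fixes $j=k$, sets $r=3-m$, and treats the four values $r\in\{0,1,2,3\}$ separately, in each case exhibiting by hand enough wedges of basis vectors of $H_k$ that are linearly independent modulo $W\wedge W$. You replace this case analysis by the single auxiliary identity $(A\wedge A)\cap(B\wedge B)=(A\cap B)\wedge(A\cap B)$, which immediately gives $\dim\bigl(S'\cap(H_j\wedge H_j)\bigr)\le\binom{m}{2}\le m$ and hence the inequality in one line; the equality analysis then reduces to the numerical coincidence $\binom{m}{2}=m\iff m\in\{0,3\}$. This is a genuine streamlining: it packages the paper's cases $r=1,2,3$ uniformly and isolates precisely why $m=3$ (equivalently $r=0$) is the only case needing real work. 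Your use of the freedom to let $j$ range over all indices in the equality discussion is a further minor neatening, guaranteeing at once that every $(k-1)$-subfamily is independent.
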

\begin{proof}
    We use induction on $k$, the base case $k = 0$ being trivial.
    Let $k \ge 1$ and $U = \sum_{i=1}^k H_i$.
    Also, let $W = \sum_{i=1}^{k-1} H_i$ and $r = \dim (U / W) \in \{0, 1, 2, 3\}$.
    Let $x, y, z$ be a basis for $H_k$, of which the first $r$ elements are linearly independent modulo $W$,
    and the remaining $3-r$ elements are in $H_k \cap W$.
    We distinguish four cases according to $r$:

    \emph{Case $r = 3$}: In this case $x \wedge y$, $x \wedge z$ and $y \wedge z$ are linearly independent modulo $W \wedge W$,
    so by induction
    \[
        \dim \sum_{i=1}^k H_i \wedge H_i = \dim \sum_{i=1}^{k-1} H_i \wedge H_i + 3 \ge \dim W + 3 = \dim U .
    \]
    Equality holds if and only if $\dim \sum_{i=1}^{k-1} H_i \wedge H_i = \dim W$,
    and by induction this happens if and only if  $H_1, \dots, H_{k-1}$ are linearly independent.

    \emph{Case $r = 2$:} Even in this case, $x \wedge y$, $x \wedge z$ and $y \wedge z$ are linearly independent modulo $W \wedge W$.
    Indeed, choosing a basis $w_1, \dots, w_d$ for $W$ with $w_1 = z$, a basis for $U \wedge U$ modulo $W \wedge W$ is given by the elements of the form $x \wedge y$, $x \wedge w_i$, $y \wedge w_i$, including $x \wedge y, x \wedge z, y \wedge z$.
    It follows by induction that
    \[
        \dim \sum_{i=1}^k H_i \wedge H_i = \dim \sum_{i=1}^{k-1} H_i \wedge H_i + 3 \ge \dim W + 3 = \dim U + 1 .
    \]
    The equality with $\dim U$ is clearly impossible in this case.

    \emph{Case $r = 1$:} An argument as in the previous case shows that $x \wedge y$ and $x \wedge z$ are linearly independent modulo $W \wedge W$, so by induction
    \[
        \dim \sum_{i=1}^k H_i \wedge H_i \ge \dim \sum_{i=1}^{k-1} H_i \wedge H_i + 2 \ge \dim W + 2 = \dim U + 1.
    \]

    \emph{Case $r = 0$:} By induction,
    \[
        \dim \sum_{i=1}^k H_i \wedge H_i \ge \dim \sum_{i=1}^{k-1} H_i \wedge H_i \ge \dim W = \dim U.
    \]
    Equality can hold only if $H_1, \dots, H_{k-1}$ are linearly independent
    and $H_k \wedge H_k \leqslant \sum_{i=1}^{k-1} H_i \wedge H_i$.
    For $x \in H_k$, write $x = \sum_{i=1}^{k-1} x_i$ where $x_i \in H_i$.
    Then for $x, y \in H_k$ we have
    \[
        x \wedge y = \sum_{i,j=1}^{k-1} x_i \wedge y_j
        = \sum_{i=1}^{k-1} x_i \wedge y_i + \sum_{1 \le i < j \le k-1} (x_i \wedge y_j - y_i \wedge x_j) \in \sum_{i=1}^{k-1} H_i \wedge H_i.
    \]
    Since $H_1, \dots , H_{k-1}$ are linearly independent, the subspaces $H_i \wedge H_j \leqslant V \wedge V$ are linearly independent for $1 \le i \le j \le k-1$.
    It therefore follows from the above equation that $x_i \wedge y_j = y_i \wedge x_j$ for all $i < j$
    and
    \[
        x \wedge y = \sum_{i=1}^{k-1} x_i \wedge y_i.
    \]
    Let $x, y \in H_k$ be linearly independent.
    Then $x \wedge y \ne 0$, so $x_i \wedge y_i \ne 0$ for some $i$, so $x_i$ and $y_i$ are nonzero and not parallel.
    Since $x_i \wedge y_j = y_i \wedge x_j$ we must have $x_j = y_j = 0$ for all $j \ne i$.
    Since this holds for all $y \notin \gen{x}$, it follows that $H_k = H_i$, contrary to the hypothesis that $H_1, \dots, H_k$ are distinct.
\end{proof}

Below we use the standard notation
\[
    \gbinom{n}{d}_p \> = \> \prod_{i=0}^{d-1} \frac{p^{n-i}-1}{p^{d-i} -1}
\]
(the ``Gaussian binomial coefficient'') for the number of $d$-dimensional subspaces of $\F_p^n$
(we will drop the $p$ subscript when it should be clear).
We recall the well-known bounds
\begin{equation}
    \label{eq:gbinom-bounds}
    p^{d(n-d)} \le \gbinom{n}{d}_p \le (1 - 1/p - 1/p^2)^{-1} p^{d(n-d)} < 4 p^{d(n-d)}
\end{equation}
(see for example \cite{neumann-praeger}*{Lemma~3.5})
Note also that
 \begin{align*}
 \gbinom{n}{3}_p & \> = \>  \frac{(p^n - 1)(p^{n-1} - 1)(p^{n-2}-1)}{(p^3-1)(p^2-1)(p-1)} \\ & \> = \>
\lambda_p \, p^{3(n-3)} \br{1 + O(1/p^{n-2})}. \>
\end{align*}
For $k \geq 1$, we also use the falling factorial notation
\[
    (n)_k \> = \>  n (n-1) \cdots (n-k+1) .
\]
For example, $\br{\gbinom{n}{3}}_k$ is the number of ordered $k$-tuples of distinct $3$-dimensional subspaces.
Finally, we note that $\Alt(V, W)$ is a vector space over $\F_p$ of dimension $\binom n 2 \dim W$, where $n = \dim V$, and we note that if $K \le W$ then $\Alt(V, K)$ is naturally a subspace of $\Alt(V, W)$, while if $H \le V$ then $\Alt(H, K)$ is naturally a quotient of $\Alt(V, W)$.

\begin{proof}[Proof of \Cref{prop:alt-1}]
    Note that a subspace $H < V$ is bad for $B$ if and only if $\dim B(H, H) \le \dim H - 3$.
    In particular there are no bad subspaces of dimension less than $3$, and a $3$-dimensional subspace is bad if and only if it is totally isotropic.
    For $3 \le h < n$, let $N_h$ be the number of $h$-dimensional bad subspaces, and let $N_{\geqslant4} = \sum_{h=4}^{n-1} N_h$.

    First we consider $N_{\geqslant4}$.
    Fix $H < V$ with $\dim H = h \geq 4$.
    For any $K \leqslant W$ with $\dim K = h - 3$, we have
    \[
        \Pr \br{B(H, H) \le K} = \frac{|\Alt(H, K)|}{|\Alt(H, W)|} = p^{-\binom{h}{2}(n-h)}.
    \]
    By summing over all choices of $K$, we obtain by a union bound
    \[
        \Pr \br{\dim B(H, H) \le h - 3}
        \le \sum_{\dim K = h-3} \Pr\br{B(H, H) \le K}
        = \gbinom{n-3}{h-3} p^{-\binom{h}{2}(n-h)},
    \]
    and then by summing over $H$ it follows that
    \begin{align*}
        \E\sqbr{N_h}
        \le \gbinom{n}{h} \gbinom{n-3}{h-3} p^{-\binom{h}{2}(n-h)}
        \le 16 p^{h(n-h) + (h-3)(n-h) - \binom{h}{2}(n-h)}
        = 16 p^{f(h)},
    \end{align*}
    where $f(h) = (h-2)(h-3)(h-n)/2$.
    Since $f$ is a cubic function with roots at $2$, $3$, and $n$, it is clear that it has no local maximum greater than $3$, so its maximum in the range $[4, n-1]$ is attained at one of the endpoints (recall that $n \ge 5$).
    Moreover $f(4) = 4-n$ and $f(n-1) = -(n-3)(n-4)/2 \le 4-n$ since $n \ge 5$.
    It follows that $f(h) \le 4-n$ on $[4, n-1]$.
    Therefore, by Markov's inequality,
    \[
        \Pr\br{N_{\geqslant4} > 0} \le \E\sqbr{N_{\geqslant4}} = \sum_{h=4}^{n-1} \E\sqbr{N_h} \le 16 (n-4) p^{4-n} ,
    \]
    from which (1) follows easily.

    For (2), we will use the method of moments (see \cite{Bil95}*{Sec.~30}) to show that $N_3$ has a Poisson distribution.
    It suffices to prove that $\E\sqbr{(N_3)_k} \to \lambda_p^k$ as $n \to \infty$ for each $k \ge 1$.
    Note that $(N_3)_k$ is the number of ordered $k$-tuples of distinct totally isotropic $3$-dimensional subspaces $H_1, \ldots, H_k$ in $V$.
    Equivalently, bijectively associating $B$ with a linear map $\tilde B : V \wedge V \to W$,
    $\tilde B$ should be zero on the whole subspace $\sum_{i=1}^k H_i \wedge H_i$ of $V \wedge V$.
    For fixed linear subspaces $H_1, \dots, H_k \le V$, it is therefore clear that
    \begin{align*}
        \Pr\br{H_1, \dots, H_k~\text{are totally isotropic}}
        &= \Pr\br{\tilde B~\text{is zero on}~\sum_{i=1}^k H_i \wedge H_i} \\
        &= p^{-(n-3) \dim \sum_{i=1}^k H_i \wedge H_i},
    \end{align*}
    and it follows that
    \[
        \E\sqbr{(N_3)_k}
        = \sum_{\substack{H_1, \dots, H_k \\ \text{distinct $3$-dim subspaces}}} p^{-(n-3) \dim \sum_{i=1}^k H_i \wedge H_i}.
    \]
    Since $\dim \sum_{i=1}^k H_i \wedge H_i \le 3k$ for all $H_1, \dots, H_k$ we have
    \[
        \E\sqbr{(N_3)_k} \ge \br{\gbinom{n}{3}}_k \>p^{-(n-3)3k},
    \]
    and
    \[
        \E\sqbr{(N_3)_k} - \br{\gbinom{n}{3}}_k p^{-(n-3)3k}
        \le \sum_{\substack{H_1, \dots, H_k\\ \text{distinct $3$-dim subspaces} \\ \dim \sum_{i=1}^k H_i < 3k}}
        p^{-(n-3) \dim \sum_{i=1}^k H_i \wedge H_i} .
    \]
    The sum is empty if $k = 1$, so we may assume $k \ge 2$.
    We split the sum according to value of $d = \dim \sum_{i=1}^k H_i$.
    Since $d < 3k$, \Cref{lem:H_i^H_i} gives $\dim \sum_{i=1}^k H_i \wedge H_i \ge d+1$.
    Therefore we have
    \begin{align*}
     \E\sqbr{(N_3)_k} - \br{\gbinom{n}{3}}_k p^{-(n-3)3k}
     &\le \sum_{d=3}^{3k-1} \gbinom{n}{d} \gbinom{d}{3}^k p^{-(n-3)(d+1)} \\
    &\le 4^{k+1} \sum_{d=3}^{3k-1} p^{d(n-d)} p^{3(d-3)k} p^{-(n-3)(d+1)} \\
    &= 4^{k+1} \sum_{d=3}^{3k-1} p^{-n+3-9k + d(3k+3-d)} \\
    &\leq 4^{k+1} (3k-3) p^{-n+3-9k + 9(k+1)^2 / 4}
    \end{align*}
    since $d(3k+3 - d) \le 9(k+1)^2 / 4$ for all $d$.
    Since $p^{-n}$ tends to zero as $n\to\infty$, it follows that
    \[
        \E\sqbr{(N_3)_k} \sim \br{\gbinom{n}{3}}_k \>p^{-(n-3)3k} \to \lambda_p^k
       \]
    as $n \to \infty$, for $p$ and $k$ fixed, as desired.

    Finally, we prove (3) using the Bonferroni inequality
    \[
        \Pr\br{N_3 > 0} \> \le \> \sum_{k=1}^K (-1)^{k-1} \E\sqbr{\binom{N_3}{k}} ,
    \]
    which holds for every odd integer $K \ge 1$ (see~\cite{comtet}*{Section~4.7}).
    Moreover, we use the inequalities
    \begin{align*}
       \frac{1}{k!} \br{\gbinom{n}{3}}_k \>p^{-(n-3)3k}
        &\le \E\sqbr{\binom{N_3}{k}}
        &\le \frac{1}{k!} \br{\gbinom{n}{3}}_k p^{-(n-3)3k}
    + \frac{1}{k!} \sum_{d=3}^{3k-1} \gbinom{n}{d} \gbinom{d}{3}^k p^{-(n-3)(d+1)}
    \end{align*}
    that we obtained above.
    For $K = 3$, we have that
    \begin{align*}
        \Pr\br{N_3 + N_{\geqslant4} > 0}
        &\le \Pr\br{N_3 > 0} + \Pr\br{ N_{\geqslant4} > 0}\\
        &\le \sum_{k=1}^3 \frac{(-1)^{k-1}}{k!} \br{\gbinom{n}{3}}_k p^{-(n-3)3k} \\
        &\qquad + \frac{1}{6} \sum_{d=3}^{8} \gbinom{n}{d} \gbinom{d}{3}^3 p^{-(n-3)(d+1)} + 16(n-4) p^{4-n} .
    \end{align*}
    We can check using a computer that this is less than $1$, provided that $p \ge 13$ and $n \ge 12$.
    Alternatively, setting $K = 9$, bounding as above gives
    \begin{align*}
        \Pr\br{N_3 + N_{\geqslant4} > 0}
        &\le \sum_{k=1}^9 (-1)^{k-1} \E\sqbr{\binom{N_3}{k}} + 16 (n-4) p^{4-n} \\
        &\le \sum_{k=1}^9 \frac{(-1)^{k-1}}{k!} \br{\gbinom{n}{3}}_k p^{-(n-3)3k} \\
        &\qquad + \sum_{k=3,5,7,9} \frac{(3k-3)4^{k+1}}{k!} p^{-n+3-9k+9(k+1)^2/4}
         + 16 (n-4) p^{4-n}.
    \end{align*}
    We can check that this is less than $1$ for $2 \le p \le 11$ and $n \ge 160$.
    The tedious details of these calculations are recorded in \Cref{appendix-1}.
    This completes the proof.
\end{proof}

\begin{remark}[Lov\'asz local lemma]
    In the proof of \Cref{prop:alt-1}, it is tempting to try applying the Lov\'asz local lemma (see \cite{AS00}*{Sec.~5}) to the collection of events of the form ``$H$ is totally isotropic'', where $\dim H = 3$, since every event in that family is independent from almost all other others.
    However, the local lemma requires joint independence, not just pairwise independence, and it is easy to see that this fails in our situation.
\end{remark}

The following variant of \Cref{prop:alt-1} is much easier and completes the proof of \Cref{thExDMax} for $p \ne 2$. For $p = 2$ see \Cref{prop:alt-3}.

\begin{proposition} \label{prop:alt-2}
    Let $p \ne 2$ be a prime, $n \ge 2$, $V = \F_p^n$ and $W = \F_p^{n-2}$.
    Let $F \colon V \to W$ be a fixed surjective linear map,
    and let $B \in \Alt(V, W)$ be chosen uniformly at random.
    Call a proper subspace $H < V$ \emph{bad (for $B$)} if
    \[
           \dim W / (B(H, H) + F(H)) \> \ge \> \dim V / H.
    \]
    Then the probability that there is no bad subspace is positive for all $p$ and $n$.
    Moreover, it tends to $1$ as either $p, n \to \infty$.
\end{proposition}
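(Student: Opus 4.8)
The plan is to characterize the bad subspaces explicitly and then control their expected number by a first‑moment argument, showing that this expectation is strictly below $1$ for all $p$ and $n$ and tends to $0$ in the stated limits; the conclusion then follows from Markov's inequality.

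\emph{Structure of bad subspaces.} Write $h=\dim H$. The defining inequality rearranges to $\dim\br{B(H,H)+F(H)}\le h-2$. Since $F$ is surjective with $\dim\ker F=2$, we have $\dim F(H)\ge h-2$, so any bad $H$ must satisfy \emph{both} $\dim F(H)=h-2$ and $B(H,H)\le F(H)$. The first condition is equivalent to $\ker F\le H$; in particular $h\ge 2$, and the only possible bad subspace of dimension $2$ is $\ker F$ itself. Moreover, via $F$ the $h$‑dimensional subspaces $H$ with $\ker F\le H$ are in bijection with the $(h-2)$‑dimensional subspaces of $V/\ker F\cong\F_p^{n-2}$, and each such $H$ has a \emph{fixed} image $F(H)$ of dimension $h-2$.

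\emph{First moment.} Fix $H$ with $\ker F\le H$ and $\dim H=h$. Since restriction is a quotient map $\Alt(V,W)\to\Alt(H,W)$, a uniform $B$ restricts to a uniform element of $\Alt(H,W)$, and $B(H,H)\le F(H)$ exactly when that restriction lies in the subspace $\Alt(H,F(H))$, which has codimension $\binom h2(n-h)$. Hence $\Pr\br{H\text{ bad}}\le p^{-\binom h2(n-h)}$, and summing over the $\gbinom{n-2}{h-2}_p$ eligible subspaces,
\[
    \E\sqbr{N_h}\ \le\ \gbinom{n-2}{h-2}_p\, p^{-\binom h2(n-h)}\qquad(2\le h\le n-1),
\]
where $N_h$ counts bad $h$‑dimensional subspaces. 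Using $\binom h2-(h-2)=\binom{h-1}{2}+1$ together with \eqref{eq:gbinom-bounds} gives $\E\sqbr{N_h}\le 4\,p^{-(n-h)(\binom{h-1}{2}+1)}$; for the small cases I would instead use the sharper $\E\sqbr{N_2}=p^{-(n-2)}$ (a single subspace) and $\E\sqbr{N_3}\le 2\,p^{-2(n-3)}$ (from $\gbinom{n-2}{1}_p\le 2p^{n-3}$).

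\emph{Summation and limits.} For $n\le 4$ the sum over $h$ has at most two terms and is bounded directly by $1$ (and is $0$ when $n=2$). For $n\ge 5$, since $\binom{h-1}{2}+1\ge 4$ for $h\ge 4$,
\[
    \E\sqbr{\sum_{h=2}^{n-1}N_h}\ \le\ p^{-(n-2)}+2p^{-2(n-3)}+4\sum_{j\ge 1}p^{-4j}\ \le\ \tfrac18+\tfrac18+\tfrac{4}{15}\ <\ 1,
\]
so $\Pr(\text{no bad subspace})\ge 1-\E\sqbr{\sum_h N_h}>0$. Every term on the right is a negative power of $p$, hence the bound tends to $0$ as $p\to\infty$; and for fixed $p$, splitting the tail $\sum_{h\ge 4}$ at $h=n/2$ bounds it by $O\br{n\,p^{-n/2}}+O\br{n\,p^{-\binom{n/2-1}{2}}}\to 0$, which with $\E\sqbr{N_2},\E\sqbr{N_3}\to 0$ gives $\E\sqbr{\sum_h N_h}\to 0$ as $n\to\infty$. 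The one delicate point is making that final numerical inequality hold for the smallest $(p,n)$, above all $p=2$: the crude estimate $\gbinom{n-2}{h-2}_p\le 4p^{(h-2)(n-h)}$ is too lossy for $h\in\{2,3\}$, so one must keep the exact value at $h=2$, the improved constant at $h=3$, and check $n\le 4$ by hand; everything else is a routine first‑moment computation.
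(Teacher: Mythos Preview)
Your proof is correct and follows essentially the same route as the paper's: the same characterization of bad subspaces ($\ker F\le H$ together with $B(H,H)\le F(H)$), the same probability computation, and the same first-moment bound $\E[N_h]\le\gbinom{n-2}{h-2}_p\,p^{-\binom h2(n-h)}$, with only a minor tactical difference in summing over $h$ (the paper rewrites the exponent as $(h^2-3h+4)(h-n)/2$ and maximizes this cubic, obtaining the single bound $4p^{-(n-2)}+4(n-3)p^{-2(n-3)}$ that handles both limits at once, whereas your geometric-series estimate forces a separate splitting argument for $n\to\infty$). One small slip: the hypothesis is $p\ne2$, so your closing remark ``above all $p=2$'' and the numerical values $\tfrac18,\tfrac{4}{15}$ (computed at $p=2$) are superfluous---with $p\ge3$ and $n\ge5$ your displayed sum is already comfortably below $\tfrac12$.
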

\begin{proof}
    Note that a subspace $H < V$ is bad for $B$ if and only if
    \[
        \dim \br{B(H, H) + F(H)} \le \dim H - 2.
    \]
    On the other hand
    \begin{align*}
        \dim \br{B(H, H) + F(H)}
        \ge \dim F(H)
        &= \dim H - \dim (H \cap \ker F) \\
        &\ge \dim H - \dim (\ker F)
        = \dim H - 2.
    \end{align*}
    It follows that a subspace $H < V$ is bad for $B$ if and only if
    \[
        B(H,H) \leqslant F(H) \hspace{0.5cm} \mbox{and} \hspace{0.5cm} \ker F \leqslant H .
    \]
    Clearly we may assume $n \ge 3$.
    If $n = 3$, then the only possible bad subspace is the plane $\ker F$, so there is a bad subspace if and only if $B(\ker F, \ker F) =0$.
    We may therefore assume $n \ge 4$.
    For $2 \le h \le n-1$ let $N_h$ be the number of $h$-dimensional bad subspaces.
    For fixed $H < V$ with $\ker F \leqslant H$ and $\dim (H) = h$, we have
    \begin{align*}
        \Pr \br{B(H,H) \leqslant F(H)}
        = \frac{|\Alt(H, F(H))|}{|\Alt(H, W)|}
        = \frac{|\Alt(\F_p^h, \F_p^{h-2})|}{|\Alt(\F_p^h, \F_p^{n-2})|}
        = p^{-\binom{h}{2} (n - h)}.
    \end{align*}
    By summing over all choices of $H \geqslant \ker F$, it follows from \eqref{eq:gbinom-bounds} that
    \begin{align*}
        \E\sqbr{N_h} \le \gbinom{n-2}{h-2} p^{-\binom{h}{2}(n-h)}
        < 4 p^{(h-2)(n-h) - \binom{h}{2}(n-h)}
        = 4 p^{(h^2 - 3h + 4) (h-n)/2}.
    \end{align*}
    Let $f(h) = (h^2 - 3h + 4)(h-n)$. Then $f'(h) = (2h-3)(h-n) + h^2 - 3h + 4$, $f'(0) = 3n + 4 > 0$, $f'(2) = 4-n \le 0$.
    Since $f$ is a cubic function it follows that $f$ has a local maximum between $0$ and $2$ and no other local maximum. In particular the maximum value of $f$ in the range $3 \le h \le n-1$ is attained at one of the endpoints:
    \begin{align*}
        \max_{3 \le h \le n-1} (h^2-3h+4)(h-n)
        &= \max\left\{-4(n-3), -((n-1)^2 - 3(n-1)  + 4)\right\} \\
        &= -4(n-3).
    \end{align*}
    Let $N = \sum_{h=2}^{n-1} N_h$. Then, by Markov's inequality,
    \begin{align*}
        \Pr\br{N>0} \le \E\sqbr{N} = \sum_{h=2}^{n-1} \E\sqbr{N_h}
        &< 4 \sum_{h=2}^{n-1} p^{(h^2 - 3h+4)(h-n) / 2}
        &\le 4 p^{-(n-2)} + 4(n-3) p^{-2(n-3)}.
    \end{align*}
    This tends to zero as either $p, n \to \infty$,
    and we can easily check that it is smaller than $1$ for all $p \ge 3$ and $n \ge 4$.
\end{proof}

\begin{proposition} \label{prop:alt-3}
    Let $n \ge 2$, $V = \F_2^n$ and $W = \F_2^{n-2}$.
    Let $F \colon V \to W$ be a uniformly random quadratic map.
    Call a proper subspace $H < V$ \emph{bad (for $B$)} if
    \begin{equation} \label{eq:F-bad}
        \dim W / F(H) \> \ge \> \dim V / H.
    \end{equation}
    Then the probability that there is no bad subspace is positive for all $n$.
    Moreover, it tends to $1$ as either $n \to \infty$.
\end{proposition}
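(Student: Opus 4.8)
The proof runs parallel to that of \Cref{prop:alt-2}, with $B(H, H)$ replaced throughout by $F(H)$; here $B$ is the polarisation of $F$, so that $B(H, H) \le F(H)$ and, unwinding \eqref{eq:F-bad}, a proper subspace $H < V$ is bad exactly when $\dim F(H) \le \dim H - 2$. In particular a bad subspace has dimension at least $2$, so there is nothing to prove when $n = 2$. For $2 \le h \le n - 1$ let $N_h$ be the number of bad $h$-dimensional subspaces and put $N = \sum_h N_h$. The key observation is that the restriction map from quadratic maps $V \to W$ to quadratic maps $H \to W$ is a surjective $\F_2$-linear map, so when $F$ is uniform the restriction $F|_H$ is uniform over the quadratic maps $H \to W$. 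Hence, for a fixed $(h-2)$-dimensional $K \le W$, the composite of $F|_H$ with the quotient $W \to W/K$ is uniform over the quadratic maps $H \to W/K$, a space of dimension $\binom{h+1}{2}(n-h)$ (the quadratic forms on $\F_2^h$ span a space of dimension $\binom h2 + h = \binom{h+1}{2}$), and so
\[
    \Pr\br{F(H) \le K} = 2^{-\binom{h+1}{2}(n-h)}.
\]
Summing over the $\gbinom{n-2}{h-2}_2$ subspaces $K$ and then over the $\gbinom{n}{h}_2$ subspaces $H$, and applying \eqref{eq:gbinom-bounds}, we obtain
\[
    \E\sqbr{N_h} \le \gbinom{n}{h}_2 \gbinom{n-2}{h-2}_2 \, 2^{-\binom{h+1}{2}(n-h)} \le 16 \cdot 2^{-(n-h)(h^2 - 3h + 4)/2}.
\]

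Next I would analyse the exponent. Since $h^2 - 3h + 4 > 0$ for every $h$, each $\E\sqbr{N_h}$ decays in $n$; moreover $g(h) := (n-h)(h^2-3h+4)$ is unimodal on $[2, n-1]$ (its derivative is a downward parabola, nonnegative at $h = 2$ and negative at $h = n-1$ for $n \ge 4$), so $\min_{2 \le h \le n-1} g(h) = \min\{g(2), g(n-1)\} = 2(n-2)$ for every $n \ge 3$. Consequently $\E\sqbr{N} \le 16(n-2)\,2^{-(n-2)}$, which tends to $0$ as $n \to \infty$; in view of \Cref{prop:reduction} this establishes the ``tends to $1$'' assertion, and since $16(n-2)\,2^{-(n-2)} < 1$ for $n \ge 9$ it also gives $\Pr(N = 0) > 0$ in that range.

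For $3 \le n \le 8$ I would bound $\E\sqbr{N}$ instead using the exact values of the Gaussian binomial coefficients. For $n = 3$ this gives $\E\sqbr{N} = \gbinom{3}{2}_2\,2^{-3} = 7/8 < 1$, and for $n \in \{5,6,7,8\}$ a direct computation again gives $\E\sqbr{N} < 1$. The one exceptional value is $n = 4$, where the first moment is only $\E\sqbr{N} \le \tfrac{35}{64} + \tfrac{45}{64} = \tfrac{5}{4}$; here one exhibits a good map explicitly. With $W = \F_2^2$, take $F = (q_1, q_2)$ where
\[
    q_1 = x_1 x_2 + x_1 x_3 + x_2 x_4, \qquad q_2 = x_1 x_2 + x_3^2 + x_3 x_4 + x_4^2.
\]
One checks that $q_1$, $q_2$, and $q_1 + q_2$ are nondegenerate quadratic forms on $\F_2^4$, and that $q_2$ — being an orthogonal sum of the hyperbolic plane $x_1 x_2$ and the anisotropic form $x_3^2 + x_3 x_4 + x_4^2$ — has minus type. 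A nondegenerate quadratic form on $\F_2^4$ has Witt index at most $2$, so none of $q_1, q_2, q_1 + q_2$ vanishes identically on a $3$-dimensional subspace; and $q_2$, having minus type, has Witt index $1$, so vanishes identically on no $2$-dimensional subspace. Therefore $\dim F(H) \ge 2$ for every $3$-dimensional $H$ and $F(H) \ne 0$ for every $2$-dimensional $H$, so $F$ has no bad subspace, and $\Pr(N = 0) > 0$ for $n = 4$ as well.

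The only genuine obstacle is the case $n = 4$, where the first-moment bound exceeds $1$, forcing either the explicit construction above or a short search over the $2^{20}$ quadratic maps $\F_2^4 \to \F_2^2$. The rest is a faithful transcription of the proof of \Cref{prop:alt-2}: the one bookkeeping difference is that two Gaussian binomial factors, rather than one, turn the constant $4$ into $16$.
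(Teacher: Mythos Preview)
Your proof is correct and follows the same first-moment/union-bound strategy as the paper: bound $\Pr(F(H)\le K)=2^{-\binom{h+1}{2}(n-h)}$, sum over $H$ and $K$, and analyse the resulting cubic exponent. Your bound $\E[N]\le 16(n-2)\,2^{-(n-2)}$ is slightly cruder than the paper's $16\cdot 2^{-(n-2)}+16(n-3)\,2^{-2(n-3)}$ (you do not separate off the $h=2$ term), so you need $n\ge 9$ rather than $n\ge 7$, but this is harmless.

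The genuine difference is in the treatment of the finitely many small $n$. The paper simply checks $n\in\{3,4,5,6\}$ by a computer search over random quadratic maps (Appendix~A.2). You instead compute the exact first moment for $n\in\{3,5,6,7,8\}$ using the Gaussian binomials (all indeed $<1$), and for the one exceptional case $n=4$ --- where the union bound gives $5/4>1$ --- you exhibit an explicit good pair $(q_1,q_2)$ and verify it by hand via Witt index considerations. Your approach is more self-contained and avoids the computer entirely; the paper's approach is shorter to write but outsources the small cases. Either is acceptable, and your explicit example for $n=4$ is a pleasant addition.
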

\begin{proof}
    A subspace $H < V$ is bad for $B$ if and only if $\dim F(H) \le \dim H - 2$.
    Clearly we may assume $n \ge 3$.
    For $2 \le h \le n-1$ let $N_h$ be the number of $h$-dimensional bad subspaces and let $N = \sum_{h=2}^{n-1} N_h$.
    For fixed $H < V$ and $K < W$ with $\dim H = h \ge 2$ and $\dim K = h - 2$, we have
    \[
        \Pr\br{F(H) \leqslant K} = \frac{2^{\frac{h(h+1)}{2} (h-2)}}{2^{\frac{h(h+1)}2 (n-2)}}
        = 2^{-h(h+1)(n-h)/2}.
    \]
    By summing over all choices of $H$ and $K$ it follows from \eqref{eq:gbinom-bounds} that
    \begin{align*}
        \E\sqbr{N_h} \le \gbinom{n}{h} \gbinom{n-2}{h-2} 2^{-h(h+1)(n-h)/2}
        < 16 \cdot 2^{(h^2 - 3h + 4) (h-n)/2}.
    \end{align*}
    By the same analysis of the previous proof we get
    \begin{align*}
        \Pr\br{N>0} < 16 \cdot 2^{-(n-2)} + 16(n-3) 2^{-2(n-3)}.
    \end{align*}
    As before this tends to zero as $n \to \infty$,
    and it is smaller than $1$ for $n \ge 7$.
    The cases $n = 3, 4, 5, 6$ can be checked on a computer with random sampling: see \Cref{appendix-2}.
\end{proof}

\section{Open problems}

We conclude by recalling some old open questions with similar flavour.

\begin{question}[Mann \cite{Man99}]
Is the derived length of $d$-maximal $2$-groups bounded?
\end{question}

It is not even known whether the nilpotency class of $d$-maximal $2$-groups can be unbounded.
The next question is equivalent to the following one.
Does there exist $c >0$ such that every finite solvable group $G$ contains an ab-maximal subgroup of size at least $|G|^c$?
See also \cite{Sab22}*{Conjecture 6}.

\begin{question}[Pyber \cite{Pyb97}]
Does there exist $c >0$ such that every finite $p$-group $P$ contains an abelian section of size at least $|P|^c$?
\end{question}

Let $m = m(n)$ be the largest integer such that every group of order $p^n$ has an abelian subgroup of order $p^m$.
From results of Burnside~\cite{burnside} and Ol'shanskii~\cite{Ols78} we know $\sqrt{2n} + o(\sqrt{n}) \leq m(n) \leq 2\sqrt{2n}$.

\begin{question}[Ol'shanskii~\cite{Ols78}]
    Is there a constant $c$ such that $m \sim c \sqrt{n}$?
\end{question}

\bibliography{refs}

\appendix

\section{Calculations}

For calculations we used Sage (The Sage Mathematics Software System (Version 6.9), The Sage Developers, 2022, \href{https://www.sagemath.org}{https://www.sagemath.org}).

\subsection{Proof of \Cref{prop:alt-1}}
\label{appendix-1}

We claim that
\begin{equation}
    \label{eq:exact-bound-1}
    \sum_{k=1}^3 \frac{(-1)^{k-1}}{k!} \br{\gbinom{n}{3}}_k p^{-(n-3)3k} + \frac{1}{6} \sum_{d=3}^{8} \gbinom{n}{d} \gbinom{d}{3}^3 p^{-(n-3)(d+1)} + 16(n-4) p^{4-n} < 1
\end{equation}
for all $p \ge 13$ and $n \ge 12$.
Define
\[
    \lambda_p(n) = \gbinom{n}{3} p^{-3(n-3)}
    = \lambda_p (1 - p^{-n}) (1 - p^{1-n}) (1 - p^{2-n}).
\]
Using \eqref{eq:gbinom-bounds} several times, we can bound the left-hand side of \eqref{eq:exact-bound-1} by
\[
    \lambda_p(n) - \frac12 \lambda_p(n)^2 + \frac16 \lambda_p(n)^3 + 2 p^{-3(n-3)}
    + \frac{(1-p^{-1}-p^{-2})^{-4}}{6} \sum_{d=3}^{8} p^{-d^2 + 12d - 24 - n}
    + 16(n-4) p^{4-n}.
\]
The unique maximum value of $-d^2+12d-24$ is $12$, attained at $d=6$.
Also, $\lambda_p(n) \le \lambda_p$, and the cubic polynomial $F_3(x) = x - x^2 / 2 + x^3 / 3$ is monotonic increasing for all $x$.
Therefore we obtain the bound
\begin{equation}
    \label{eq:crude-bound-1}
    F_3(\lambda_p)
    + 2 p^{-3(n-3)}
    + \frac{(1-p^{-1}-p^{-2})^{-4}}{6} \br{p^{12 - n} + 5 p^{11-n}} + 16(n-4) p^{4-n},
\end{equation}
which is again decreasing in $p$ for $n \ge 12$. Therefore it suffices to check that \eqref{eq:crude-bound-1} is less than $1$ for $(p, n) = (13, 15)$ and $(p,n) = (17, 12)$ and to check \eqref{eq:exact-bound-1} directly for $p = 13$ and $12 \le n \le 14$.

Next, we claim that
\begin{align}
    \sum_{k=1}^9 \frac{(-1)^{k-1}}{k!} \br{\gbinom{n}{3}}_k p^{-(n-3)3k} + \sum_{k=3,5,7,9} \frac{(3k-3)4^{k+1}}{k!} p^{-n+3-9k+9(k+1)^2/4}
    + 16 (n-4) p^{4-n} < 1\label{eq:exact-bound-2}
\end{align}
for $2 \le p \le 11$ and $n \ge 160$.
Observe that
\[
    \br{\gbinom{n}{3}}_k p^{-(n-3)3k} \le \lambda_p(n)^k
\]
and
\[
    \br{\gbinom{n}{3}}_k p^{-(n-3)3k} \ge \lambda_p(n)^k \br{ 1- \binom{k}{2} \gbinom{n}{3}^{-1}}
    \ge \lambda_p(n)^k - k^2 \lambda_p^k p^{9-3n},
\]
so we obtain the bound
\[
    F_9(\lambda_p(n)) + \sum_{k=3,5,7,9} \br{\frac{k^2 \lambda_p^k}{k!} p^{9-3n} + \frac{(3k-3)4^{k+1}}{k!} p^{147-n}} + 16(n-4) p^{4-n},
\]
where $F_9$ is the polynomial $F_9(x) = \sum_{k=1}^9 (-1)^{k-1} x^{k} / k!$. Since $F_9(x)$ is monotonic increasing for $0 < x < 4$ and $\lambda_p(n)$ is monotonic increasing in $n$ and decreasing in $p$, we have $F_9(\lambda_p(n)) \le F_9(\lambda_2)$,
and the other terms are obviously largest for $p=2$.
Bounding the middle terms crudely, we obtain the bound
\begin{equation}
    \label{eq:crude-bound-2}
    F_9(\lambda_2) + 4^{11} (2^{9-3n} + 2^{147-n}) + 16(n-4) 2^{4-n}.
\end{equation}
To finish it suffices to check that \eqref{eq:crude-bound-2} is less than $1$ for $n = 180$ and to check \eqref{eq:exact-bound-2} directly for $2 \le p \le 11$ and $160 \le n \le 180$.

To check all these statements we used the code below.

\begin{lstlisting}[language=Python]
def lamb(p):
    return 1/((1-1/p) * (1-1/p^2) * (1-1/p^3))

def F(k, x):
    return sum((-1)^(i-1) * x^i / factorial(i) for i in [1..k])

def boundA1(p, n):
    return sum((-1)^(k-1) * binomial(gaussian_binomial(n, 3, p), k) * p^(-(n-3)*3*k) for k in [1..3]) + 1/6 * sum(gaussian_binomial(n, d, p) * gaussian_binomial(d, 3, p)^3 * p^(-(n-3)*(d+1)) for d in [3..8]) + 16 * (n-4) * p^(4-n)

def boundA2(p, n):
    return F(3, lamb(p)) + 2 * p^(-3*n + 9) + (1-1/p-1/p^2)^(-4) * (1/6 + 1/p) * p^(12-n) + 16 * (n-4) * p^(4-n)

def boundA3(p, n):
    return sum((-1)^(k-1) * binomial(gaussian_binomial(n, 3, p), k) * p^(-(n-3)*3*k) for k in [1..9]) + sum((3*k-3)*4^(k+1) / factorial(k) * p^(-n+3-9*k+9*(k+1)^2/4) for k in [3,5,7,9]) + 16 * (n-4) * p^(4-n)

def boundA4(n):
    return F(9, lamb(2)) + 4^11 * (2^(9-3*n) + 2^(147-n)) + 16 * (n-4) * 2^(4-n)

print(max([boundA1(13, n) for n in [12, 13, 14]] + [boundA2(13, 15), boundA2(17, 12)] + [boundA3(p, n) for p in primes(2, 12) for n in [160..180]] + [boundA4(180)]) < 1)
\end{lstlisting}

\subsection{Proof of \Cref{prop:alt-3}}
\label{appendix-2}

For $n = 3, 4, 5, 6$ we claim that there is at leat one quadratic map $F : \F_2^n \to \F_2^{n-2}$ such that $\dim F(H) \ge \dim H - 1$ for every proper subspace $H < V$. The following Sage program checks this by random sampling.

\begin{lstlisting}[language=Python]
F = GF(2)

def random_quad_form(n):
    B = Matrix.random(F, n, n)
    for i in range(n):
        for j in range(i):
            B[i, j] = B[j, i]
    return B

def random_quadratic_map(n, m):
    return [random_quad_form(n) for _ in range(m)]

def quadratic_map_image(B, subspace):
    basis = subspace.basis()
    d, m = len(basis), len(B)
    W = F**m
    return W.subspace([W([x * Bi * y for Bi in B]) for x in basis for y in basis])

for n in [2..6]:
    V = F ** n
    while True:
        B = random_quadratic_map(n, n-2)
        if all(quadratic_map_image(B, H).dimension() >= h - 1 for h in [1..n-1] for H in V.subspaces(h)):
            break
    print(n, 'pass')
\end{lstlisting}

\end{document}